\documentclass[a4paper,11pt,twoside,reqno]{amsart}

\usepackage[utf8]{inputenc}
\usepackage[plainpages=false,pdfpagelabels=true]{hyperref}
\usepackage{amssymb,amsthm}
\usepackage{amsmath}
\usepackage[margin=1in]{geometry}
\usepackage{dutchcal}
\usepackage{xcolor}

\newtheorem{Theorem}{Theorem}[section]
\newtheorem{Prop}[Theorem]{Proposition}

\newtheorem{Cor}[Theorem]{Corollary}
\theoremstyle{definition}

\newtheorem{Bem}[Theorem]{Remark}

\newcommand{\tr}{\operatorname{Tr}}
\newcommand{\Ric}{\operatorname{Ric}}
\newcommand{\Id}{\operatorname{Id}}
\newcommand{\Scal}{\operatorname{Scal}}
\newcommand{\Hess}{\operatorname{Hess}}

\newcommand{\Div}{\operatorname{div}}
\newcommand{\Index}{\operatorname{Index}}
\newcommand{\Nullity}{\operatorname{Nullity}}

\allowdisplaybreaks[1]

\renewcommand{\epsilon}{\varepsilon}

\numberwithin{equation}{section}

\newtheorem{innercustomgeneric}{\customgenericname}
\providecommand{\customgenericname}{}
\newcommand{\newcustomtheorem}[2]{%
  \newenvironment{#1}[1]
  {%
   \renewcommand\customgenericname{#2}%
   \renewcommand\theinnercustomgeneric{##1}%
   \innercustomgeneric
  }
  {\endinnercustomgeneric}
}

\newcustomtheorem{customthm}{Theorem}

\newcustomtheorem{customprop}{Proposition}

\title[Conformal-biharmonic stability of the identity map]{On the conformal-biharmonic stability of the identity map of Einstein manifolds}
\author{Volker Branding}
\address{University of Rostock, Institute of Mathematics\\
Ulmenstraße 69, 18057 Rostock, Germany}
\email{volker.branding@uni-rostock.de}

\author{Simona Nistor}
\address{Al.I. Cuza University of Iasi\\
Faculty of Mathematics, Blvd. Carol I, 11, 700506 Iasi, Romania}
\email{nistor.simona@ymail.com}

\author{Cezar Oniciuc}
\address{Al.I. Cuza University of Iasi\\
Faculty of Mathematics, Blvd. Carol I, 11, 700506 Iasi, Romania}
\email{oniciucc@uaic.ro}

\thanks{The first author gratefully acknowledges the support of the Austrian Science Fund (FWF) through the project ``Geometric Analysis of Biwave Maps'' ({\tt DOI: 10.55776/P34853}).}

\subjclass[2010]{58E20; 53C43}
\keywords{conformal-biharmonic maps; stability; Einstein manifolds}

\begin{document}

\begin{abstract}
	
The identity map of an Einstein manifold is a critical point of both the classical energy functional and the conformal-bienergy functional. In this paper, we investigate the conformal-biharmonic stability of the identity map of compact Einstein manifolds of dimension at least four and with nonnegative scalar curvature, and we compare it with the harmonic stability, when the identity map is considered as a harmonic map. Somewhat surprisingly, we show that the conformal-biharmonic index coincides with the harmonic index, with a single notable exception: the four-dimensional Euclidean sphere. In this case, the identity map is unstable with respect to the energy functional, as shown independently by Mazet and Smith, whereas it is stable with respect to the conformal-bienergy functional.

\end{abstract}

\maketitle

\section{Introduction}

Conformal-biharmonic maps were first introduced by Bérard in \cite{MR2449641, MR2722777, MR3028564} as a higher-order generalization of the classical harmonic maps between Riemannian manifolds (see, for example, \cite{MR2044031, MR164306, MR2389639}). Similarly to biharmonic maps, conformal-biharmonic maps are characterized by a fourth-order nonlinear elliptic partial differential equation. Any harmonic map is automatically biharmonic, but not always conformal-biharmonic. Another important difference between conformal-biharmonic and biharmonic maps is that the former are invariant under conformal changes of the domain metric in dimension four, whereas biharmonic maps do not enjoy such conformal invariance in any dimension. This conformal invariance property parallels that of harmonic maps, which are invariant under conformal transformations of the domain in dimension two. Harmonic maps, biharmonic maps and conformal-biharmonic maps arise as solutions to variational problems, being critical points of the classical energy functional $E$, bienergy functional $E_2$ and conformal-bienergy functional $E_2^c$, respectively. All these functionals are defined on the space of smooth maps between two given Riemannian manifolds.

Conformal-biharmonic maps have mostly been studied from an analytical point of view (see, for example, \cite{MR2996776, MR4142862, branding2025, MR2124627, MR4847311, MR2094320}).
However, very recently, these maps have been investigated from the perspective of submanifold theory and, in addition, an extensive analysis of their stability was conducted (see, for example, \cite{BMNOR, BNO}). In particular, the following result concerning the conformal-biharmonic stability of the identity map of the Euclidean sphere has been obtained.

\begin{Theorem}[\cite{BNO}]\label{stability-identity}
	We consider $\Id:\mathbb{S}^m\to\mathbb{S}^{m}$ to be the identity map of the Euclidean unit sphere $\mathbb{S}^m$, thought of as a conformal-biharmonic map. Then, we have
	\begin{itemize}
		\item[i)] if $m=1$ or $m=3$, the conformal-biharmonic index and the conformal-biharmonic nullity of $\mathbb{S}^m$ are $0$ and $m(m+1)/2$, respectively;
		\item[ii)] if $m=2$ or $m=4$, the conformal-biharmonic index and the conformal-biharmonic nullity of $\mathbb{S}^m$ are $0$ and $(m+1)(m+2)/2$, respectively;
		\item[iii)] if $m\geq 5$, the conformal-biharmonic index and the conformal-biharmonic nullity of $\mathbb{S}^m$ are $m+1$ and $m(m+1)/2$, respectively.
	\end{itemize}
\end{Theorem}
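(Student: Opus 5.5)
The plan is to write the second variation (Hessian) of the conformal-bienergy $E_2^c$ at $\Id:\mathbb{S}^m\to\mathbb{S}^m$ as a self-adjoint fourth-order operator $J^c$ acting on vector fields $V\in C(T\mathbb{S}^m)$. Then we diagonalize $J^c$ using the spectral decomposition of vector fields on the round sphere. Recall that $E_2^c(\varphi)=\frac12\int |\tau(\varphi)|^2+\frac23\int \Scal\,|d\varphi|^2-2\int \tr\langle d\varphi(\Ric\,\cdot),d\varphi\,\cdot\rangle$ (up to normalization). Since $\mathbb{S}^m$ is Einstein with $\Ric=(m-1)g$ and $\Scal=m(m-1)$, the lower-order terms become a constant multiple $c_m=\frac23 m(m-1)-2(m-1)$ of the energy. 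Hence
$$E_2^c=E_2+c_m E \quad\text{near } \Id.$$
Because $\tau(\Id)=0$, the Hessian of $E_2$ at $\Id$ is $\int|J(V)|^2$, where $J=\bar\Delta-(m-1)$ acting on $V$ is the Jacobi operator of the energy. This reduces the problem to
$$H^c(V,V)=\int\langle J(V),J(V)\rangle+c_m\int\langle J(V),V\rangle,$$
that is, $J^c=J^2+c_mJ$.

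The next step is the spectrum of $J$ on $\mathbb{S}^m$. Decompose $V=\nabla f+X$ with $X$ divergence free (Hodge decomposition; $\mathbb{S}^m$ has no harmonic 1-forms). On gradients, $J(\nabla f)=\nabla(\Delta f-2(m-1)f)$ via the Weitzenböck formula, where $\Delta$ is the positive Laplacian. The eigenvalues on gradients are therefore $\lambda_k-2(m-1)$ with $\lambda_k=k(k+m-1)$, $k\ge1$. For $k=1$ (conformal fields) this gives $-(m-1)$, with multiplicity $m+1$; this is Smith's instability for $m\ge3$. On coclosed fields the eigenvalues of the rough Laplacian are $(k+1)(k+m-2)-1$, $k\ge1$. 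Hence $J$ has eigenvalue $0$ on Killing fields, with multiplicity $m(m+1)/2$, and positive eigenvalues otherwise. In every case $J^c$ has eigenvalues $\mu(\mu+c_m)$ for each eigenvalue $\mu$ of $J$.

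Then we count. Nonnegative eigenvalues $\mu\ge 0$ give $\mu(\mu+c_m)\ge0$, with equality iff $\mu=0$ or $\mu=-c_m$. So we check the sign of $c_m=\frac23(m-1)(m-3)$.
\begin{itemize}
\item For $k=1$ gradients, $\mu=-(m-1)$ and $\mu(\mu+c_m)=-(m-1)\bigl(\tfrac23(m-1)(m-3)-(m-1)\bigr)=-\tfrac{(m-1)^2}{3}(2m-9)$. This quantity is negative iff $m\ge5$, zero for no integer $m$, and positive for $m\le4$. I must recheck the normalization of $E_2^c$, since the statement requires vanishing at $m=2$ and $m=4$; the correct coefficients should give a factor like $(m-2)(m-4)$ here.
\item For $m=1,3$ the conformal fields are then positive directions and only Killing fields give nullity, $m(m+1)/2$.
\item For $m=2,4$ the conformal gradients join the kernel, giving $m(m+1)/2+m+1=(m+1)(m+2)/2$.
\item For $m\ge5$ they give index $m+1$.
\end{itemize}
Finally, I need to check that for $k\ge2$ gradients and for non-Killing coclosed fields, $\mu+c_m>0$, so these directions contribute nothing.

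The main obstacle is getting the exact coefficients of the curvature terms in Bérard's functional, and hence the exact constant $c_m$. This is what produces the factor vanishing precisely at $m=2,4$. I would first derive $J^c$ directly from the Euler–Lagrange equation of $E_2^c$ linearized at $\Id$, rather than trusting the schematic form above. With the correct constant, the case $\mu=\lambda_2-2(m-1)=m+3$ for $k=2$ gradients must also be checked to keep $\mu+c_m>0$; this holds in all dimensions since $c_m\ge -\frac23$ or so. The remaining counting is routine.
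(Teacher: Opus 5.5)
Your framework coincides with the paper's (Section~4, specialised to $\lambda=m-1$). On an Einstein domain $E_2^c=E_2+c_mE$, so $J_2^c=J^2+c_mJ$, and your constant $c_m=\frac23(m-1)(m-3)$ is exactly the paper's $\frac23(m-3)\lambda$. So the normalization of $E_2^c$ is not the issue, and rechecking it will not produce the missing factor.

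The gap is an arithmetic slip in the decisive eigenvalue. For a first eigenfunction, $\Delta f=mf$, one has $J(\nabla f)=\bigl(m-2(m-1)\bigr)\nabla f=-(m-2)\nabla f$, not $-(m-1)\nabla f$. Your value would already make $\Id_{\mathbb{S}^2}$ energy-unstable, which contradicts Theorem~\ref{stability-identity-harmonic} and your own remark that Smith's instability starts at $m=3$. With the correct value,
\[
\mu(\mu+c_m)=-(m-2)\Bigl(\tfrac23(m-1)(m-3)-(m-2)\Bigr)=-\tfrac13(m-2)(m-4)(2m-3).
\]
This is positive for $m=1,3$, zero for $m=2,4$ and negative for $m\ge5$, which is precisely the trichotomy of the statement. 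Equivalently, it is the paper's factor $(\mu-2\lambda)\bigl(\mu-\frac23(6-m)\lambda\bigr)$ with $\mu=m$, $\lambda=m-1$. As written, your expression $-\frac13(m-1)^2(2m-9)$ vanishes at $m=1$ and is positive at $m=2,4$. It therefore contradicts the nullities claimed in i) and ii), and the proposal ends without a valid count.

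Two further slips need fixing before the rest of the count goes through:
\begin{itemize}
\item On coclosed fields the rough Laplacian has eigenvalues $(k+1)(k+m-2)-(m-1)$, not $(k+1)(k+m-2)-1$. With your formula the Killing fields ($k=1$) would get $J$-eigenvalue $m-2$ instead of $0$, so your nullity $m(m+1)/2$ would not follow. The corrected $J$-eigenvalues there are $(k+1)(k+m-2)-2(m-1)\ge m+2$ for $k\ge2$.
\item For $k=2$ gradients, $\lambda_2-2(m-1)=4$, not $m+3$. This is harmless: $c_m\ge-\frac23$ for every integer $m$, so $\mu+c_m>0$ for all $\mu\ge4$.
\end{itemize}
Two small cases also need a word. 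For $m=2$ the conformal gradients already lie in $\ker J$, and $-c_m=\frac23$ is not an eigenvalue of $J$. For $m=1$, $\mathbb{S}^1$ does carry a harmonic $1$-form, but it is dual to the Killing field $\partial_\theta$, so the decomposition and the count are unaffected. With these corrections your counting argument gives the theorem.
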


The identity map of the Euclidean sphere, as any harmonic map, is automatically stable as a biharmonic map. However, in light of the classical results of Mazet and Smith on its stability as a harmonic map, 
Theorem \ref{stability-identity} appears particularly interesting.  

\begin{Theorem}[\cite{MR336767, MR0375386}]\label{stability-identity-harmonic}
	We consider $\Id:\mathbb{S}^m\to\mathbb{S}^{m}$ to be the identity map of $\mathbb{S}^m$, thought of as a harmonic map. Then, we have
	\begin{itemize}
		\item[i)] if $m=1$, the harmonic index and the harmonic nullity of $\mathbb{S}^m$ are $0$ and $1$, respectively;
		\item[ii)] if $m=2$, the harmonic index and the harmonic nullity of $\mathbb{S}^m$ are $0$ and $6$, respectively;
		\item[iii)] if $m\geq 3$, the harmonic index and the harmonic nullity of $\mathbb{S}^m$ are $m+1$ and $m(m+1)/2$, respectively.
	\end{itemize}
\end{Theorem}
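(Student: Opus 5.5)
The plan is to compute the second variation of the energy $E(\varphi)=\frac12\int_{\mathbb{S}^m}|d\varphi|^2$ at $\varphi=\Id$ and diagonalize it using the spectrum of the Hodge Laplacian on $1$-forms. For a harmonic map $\varphi:M\to N$ and a variation field $V\in\Gamma(\varphi^{-1}TN)$, the second variation is
\begin{equation*}
I(V,V)=\int_M \langle J V,V\rangle, \qquad JV=\bar\Delta V-\tr R^N(d\varphi\cdot,V)d\varphi\cdot ,
\end{equation*}
where $\bar\Delta$ is the rough Laplacian (positive sign convention). For $\varphi=\Id:\mathbb{S}^m\to\mathbb{S}^m$, $V$ is a vector field on $\mathbb{S}^m$. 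The curvature term equals $\Ric(V)=(m-1)V$, so $JV=\bar\Delta V-(m-1)V$. By the Weitzenböck formula $\Delta_H V^\flat=\bar\Delta V^\flat+\Ric(V)^\flat$, this becomes $J=\Delta_H-2(m-1)$ acting on $1$-forms. The index is therefore the number of eigenvalues of $\Delta_H$ on $1$-forms below $2(m-1)$, counted with multiplicity, and the nullity is the multiplicity of $2(m-1)$.

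The key step is to use the known spectrum of $\Delta_H$ on $1$-forms of $\mathbb{S}^m$, via the Hodge decomposition into exact and coexact forms; harmonic $1$-forms vanish for $m\ge2$.

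\textbf{Exact forms.} Exact forms $df$ with $\Delta f=k(k+m-1)f$, $k\ge1$, give eigenvalues $k(k+m-1)$. For $k=1$ the eigenvalue is $m$, with multiplicity $m+1$; these are the conformal fields $\nabla x_i$. For $k=2$ the eigenvalue is $2(m+1)>2(m-1)$.

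\textbf{Coexact forms.} Coexact eigenvalues are $(k+1)(k+m-2)$, $k\ge1$. For $k=1$ this gives $2(m-1)$, namely the Killing fields, with multiplicity $m(m+1)/2$. Higher $k$ give strictly larger values.

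Comparing $m$ with $2(m-1)$ then sorts out the cases.
\begin{itemize}
\item For $m\ge3$, $m<2(m-1)$, so the conformal fields contribute $m+1$ to the index. The Killing fields give nullity $m(m+1)/2$.
\item For $m=2$, $m=2(m-1)=2$, so the index is $0$. The conformal and Killing fields together give nullity $3+3=6$, matching the conformal group of dimension $6$.
\item For $m=1$, $J=\bar\Delta$ on vector fields $f\partial_\theta$, which is nonnegative. The kernel consists of the constants, so the index is $0$ and the nullity is $1$. The Killing case $m(m+1)/2=1$ agrees.
\end{itemize}

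The main obstacle is justifying the $1$-form spectrum and checking that no other eigenvalue falls at or below $2(m-1)$. I would handle this by
\begin{itemize}
\item deriving the exact part from the function spectrum, since $\Delta_H d=d\Delta$;
\item bounding the coexact part from below by Gallot–Meyer type estimates, $\Delta_H\ge$ the Killing eigenvalue on coexact forms, with equality exactly for Killing fields.
\end{itemize}
This avoids needing the full representation-theoretic list.
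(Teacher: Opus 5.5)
Your proof is correct and follows the same route as Smith's cited proof. It is also the framework this paper uses for general Einstein manifolds in Section 4 with $\lambda=m-1$: $J=\Delta_H-2\lambda$, the gradient/divergence-free splitting, the function spectrum for the gradient part, and the bound $\mu\geq 2\lambda$ on divergence-free fields with equality exactly for Killing fields. For that last bound you do not need Gallot--Meyer, since the identity $\frac12\int_M|\mathcal{L}_Xg|^2\,v_g=\int_M\langle(\Delta_H-2\lambda)X,X\rangle\,v_g$ for $\Div X=0$ gives both the inequality and the equality case directly.
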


Therefore, for $m\leq 4$, the identity map is ``more stable'' with respect to the conformal-bienergy functional than with respect to the classical energy functional.

In this note, we continue the study of the stability of conformal-biharmonic maps, focusing on the identity map of a compact Einstein manifold with nonnegative scalar curvature and dimension at least four. We first determine the expression of the Jacobi operator of the conformal-bienergy functional associated with an arbitrary conformal-biharmonic map. Then, we focus on the case of the identity map of Einstein manifolds. Since this map is also a critical point of the classical energy functional, it is natural to compare its index and stability with respect to the two functionals. Somewhat surprisingly, the index and nullity of the identity map coincide for both functionals in almost all cases, with a single notable exception -- the one mentioned above. More precisely, we obtain

\begin{customthm}{\ref{main-theorem}}
	Let $\left(M^m,g\right)$ be a compact Einstein manifold of positive scalar curvature with $m\geq 4$ and consider $\Id:\left(M,g\right)\to \left(M,g\right)$ to be the identity map. Then,
	$$
	\Index_{E_2^c}\left(\Id\right)=\Index_{E}\left(\Id\right)\qquad\text{and}\qquad
	\Nullity_{E_2^c}\left(\Id\right)=\Nullity_{E_2} \left(\Id\right),
	$$
	with only one exception, namely up to a rescaling of the round metric, $\left(M,g\right)=\mathbb{S}^4$. In this case,
	$$
	\Index_{E_2^c}\left(\Id\right)=0<\Index_{E}\left(\Id\right)=5\qquad\text{and}\qquad \Nullity_{E_2^c}\left(\Id\right)=15>\Nullity_{E}\left(\Id\right)=10.
	$$
\end{customthm}

Thus, in the case of the identity map of Einstein manifolds, the conformal-bienergy functional enjoys ``stronger stability properties'' than the classical energy functional.

We also obtain the following interesting result which, in combination with Proposition \ref{prop-non-negativeE2c}, says that in the homotopy class of the identity map of the Euclidean sphere $\mathbb{S}^m$, $m\geq 5$, the infimum of $E_2^c$ is zero.

\begin{customthm}{\ref{prop-e2c-small}}
For any $\varepsilon>0$, there exists a map $\phi$ homotopic to the identity map $\Id:\mathbb{S}^m\to\mathbb{S}^m$, $m\geq 5$, with $E_2^c(\phi)<\varepsilon$.
\end{customthm}

\textbf{Conventions.} Throughout this article we shall use the following sign conventions: 
for the Riemannian curvature tensor field we use 
$$
R(X,Y)Z=\left[\nabla_X,\nabla_Y\right]Z-\nabla_{[X,Y]}Z \qquad \text{and} \qquad R(X,Y,Z,V)=g(R(X,Y)V,Z),
$$ 
where $X$, $Y$, $Z$, $V$ are vector fields. For the Ricci tensor field we employ
$$
g(\Ric(X),Y)=\Ric(X,Y)=\tr \left\{Z\to R(Z,X)Y\right\},
$$
and the scalar curvature is given by
$$
\Scal=\tr\Ric.
$$
The trace is taken with respect to the domain metric and we write $\tr$ instead of $\tr_g$. Moreover, we also use the following convention for the covariant derivative of the curvature tensor field
\begin{align*}
\left(\nabla R\right)(X,Y,Z,V)&=\left(\nabla_X R\right)(Y,Z,V)\\
&=\nabla_X R(Y,Z)V-R\left(\nabla_X Y,Z\right)V-R\left(Y,\nabla_X Z\right)V-R(Y,Z)\nabla_X V.
\end{align*}
To avoid any ambiguity, as we work with various pull-back bundles, we use the notation $\nabla^\phi$ for the connection on the pull-back bundle $\phi^{-1}TN$.

For the rough Laplacian on the pull-back bundle $\phi^{-1} TN$ we employ the geometers sign convention
$$
\Delta^\phi=-\tr\left(\left(\nabla^\phi\right)^2-\nabla^{\phi}_\nabla\right)=-\tr\left(\nabla^\phi\nabla^\phi-\nabla^\phi_\nabla\right).
$$
In particular, this implies that the Laplace operator has a nonnegative spectrum. For the metric $g$ on $TM$ and also for the metric on the other vector bundles involved, we use the same symbol $\langle\cdot,\cdot\rangle$. 

In order to perform various computations, we fix an arbitrary point $p\in M$ and denote by $\left\{X_i\right\}_{i\in\overline{1,m}}$ a geodesic frame field around it. We work locally and, at the end, we evaluate the expressions involved at $p$. 

All manifolds are assumed to be smooth, connected and without boundary. Thus, in our paper, all compact manifolds are closed.

\section{Preliminaries}

We recall that the classical \emph{energy functional} $E$ is defined on the set of all smooth maps between two fixed Riemannian manifolds $\phi:\left(M^m,g\right)\to\left(N^n,h\right)$ and is given by
\begin{align*}
E(\phi):=\frac{1}{2}\int_M|d\phi|^2 \ v_g,
\end{align*}
where we assume for simplicity that $M$ is compact. The critical points of this functional are called \emph{harmonic maps}, and they are characterized by the vanishing of the tension field
\begin{align*}
\tau(\phi):=\tr\nabla^\phi d\phi, \qquad \tau(\phi)\in C\left(\phi^{-1}TN\right).
\end{align*}
The harmonic map equation, i.e., $\tau(\phi)=0$, is a second-order nonlinear elliptic partial differential equation.

A natural higher-order generalization of harmonic maps was introduced in the 1980s by Jiang in \cite{MR886529} (see also \cite{MR2640582}), where the author defined the \emph{bienergy functional}
\begin{align*}
E_2(\phi):=\frac{1}{2}\int_M|\tau(\phi)|^2 \ v_g,
\end{align*}
and called its critical points \emph{biharmonic maps}. The Euler-Lagrange equation for this functional, i.e., the biharmonic map equation, is given by the vanishing of the bitension field
\begin{align*}
\tau_2(\phi):=-\Delta^\phi\tau(\phi)-\tr R^N(d\phi(\cdot), \tau(\phi))d\phi(\cdot), \qquad \tau_2(\phi)\in C\left(\phi^{-1}TN\right).
\end{align*}
The biharmonic map equation, i.e., $\tau_2(\phi)=0$, is a fourth-order nonlinear elliptic partial differential equation, and it is evident that any harmonic map is biharmonic.

While the harmonic map equation is invariant under conformal transformations of the domain in dimension two, the biharmonic map equation does not enjoy this property in any dimension. Thus, it is natural to search for higher-order energy functionals that are conformally invariant in even dimensions, the most convenient setting being dimension four. In 2008, Bérard introduced a new family of functionals $E_r^c$ which, in dimension $m=2r$, are invariant under conformal changes of the domain metric (see \cite{MR2449641,MR2722777}). For $r=1$, Bérard’s functional coincides with the classical energy functional $E$. For $r=2$, the functional $E_2^c$ is called \emph{conformal-bienergy functional}, or simply, \emph{c-bienergy functional}, and it is given by 
\begin{align}\label{c-bienergy}
	E_2^c(\phi):=&\frac{1}{2}\int_{M}\left|\tau(\phi)\right|^2+\frac{2}{3}\Scal |d\phi|^2-2\tr \langle d\phi(\Ric(\cdot)),d\phi(\cdot)\rangle\ v_g \nonumber\\
	=&E_2(\phi)+\int_{M}\frac{1}{3}\Scal |d\phi|^2-\tr \langle d\phi(\Ric(\cdot)),d\phi(\cdot)\rangle\ v_g.
\end{align}
The critical points of the c-bienergy functional are characterized by the vanishing of the c-bitension field
$$
\tau_2^c(\phi):=\tau_2 (\phi)-\frac{2}{3}\Scal \tau(\phi)+2\tr\left(\nabla^\phi d\phi\right)\left(\Ric(\cdot),\cdot\right)+\frac{1}{3}d\phi(\nabla \Scal), \qquad \tau_2^c(\phi)\in C\left(\phi^{-1}TN\right),
$$
and they are called \emph{conformal-biharmonic maps}, or simply, \emph{c-biharmonic maps}. The c-biharmonic map equation, i.e., $\tau_2^c(\phi)=0$, also is a fourth-order nonlinear elliptic partial differential equation. A quick inspection of the c-biharmonic map equation shows that not every harmonic map is c-biharmonic. This fact is illustrated by the identity map of a Riemannian manifold which is c-biharmonic if and only if the manifold has constant scalar curvature.

\section{The second variation formula of the c-bienergy functional}


Let $\phi:\left(M^m,g\right)\to\left(N^n,h\right)$ be a c-biharmonic map and consider $\Phi$ to be a two-parameter smooth variation of $\phi$, i.e., a smooth map 
$$
\Phi:\mathbb{R}\times\mathbb{R}\times M\to N, \qquad \Phi(s,t,p)=\phi_{s,t}(p),\qquad (s,t,p)\in \mathbb{R}\times\mathbb{R}\times M,
$$
such that $\Phi(0,0,p)=\phi_{0,0}(p)=\phi(p)$. The variational vector fields associated with the variation $\left\{\phi_{s,t}\right\}_{(s,t)\in\mathbb{R}\times\mathbb{R}}$ of $\phi$ are 
$$
V(p)= \left.\frac{d}{ds}\right|_{s=0}\left\{\phi_{s,0}(p)\right\}=\left(\left.\frac{d}{ds}\right|_{s=0}\left\{\phi_{s,0}^\alpha(p)\right\}\right)\left(\partial y^\alpha\right)_{\phi(p)}=d\Phi_{(0,0,p)}\left(\partial s\right)\in T_{\phi(p)}N
$$ 
and
$$
W(p)= \left.\frac{d}{dt}\right|_{t=0}\left\{\phi_{0,t}(p)\right\}=\left(\left.\frac{d}{dt}\right|_{t=0}\left\{\phi_{0,t}^\alpha(p)\right\}\right)\left(\partial y^\alpha\right)_{\phi(p)}=d\Phi_{(0,0,p)}\left(\partial t\right)\in T_{\phi(p)}N,
$$ 
where $\left(\phi^\alpha_{s,t}\right)_{\alpha=\overline{1,n}}$ are the components of $\phi_{s,t}$ in the local coordinates $\left(y^\alpha\right)$ on the target manifold.

It is well known (see \cite{MR336767, MR0375386} and also \cite{MR2044031, MR1252178}) that the Jacobi operator $J$ of the energy functional $E$ associated with a harmonic map $\phi$ is given by
\begin{equation}\label{jacobi-energy}
	J(W)=\Delta^\phi W+\tr R^N(d\phi(\cdot),W)d\phi(\cdot),
\end{equation}
where $W$ is an arbitrary section in the pullback-bundle $\phi^{-1}TN$, i.e., $W\in C\left(\phi^{-1}TN\right)$. It is a second-order linear elliptic $L^2$-selfadjoint operator and the Hessian of $\phi$, defined as 
$$
\Hess_E\left(\phi\right)(V,W)=
\left.\frac{\partial^2}{\partial s\partial t}\right|_{(s,t)=(0,0)}\left\{E\left(\phi_{s,t}\right) \right\}=\int_M \langle J(W),V\rangle\ v_g,
$$
is a symmetric bilinear form on $C\left(\phi^{-1}TN\right)$.

We emphasize that, even when $\phi$ is not harmonic, we shall still denote by $J$ the differential operator defined by \eqref{jacobi-energy}.

It is also known (see \cite{MR886529} and the translation \cite{MR2640582}) that the Jacobi operator $J_2$ of the bienergy functional $E_2$ associated with a biharmonic map $\phi$ is given by
\begin{align}\label{jacobi-bienergy}
 J_2(W)&=J^2(W)-R^N(\tau(\phi), W)\tau(\phi)-\tr \left(\nabla^N R^N\right)\left(\tau(\phi),d\phi(\cdot),W, d\phi(\cdot)\right) \nonumber \\
 &\quad+\tr \left\{\vphantom{R^N\left(W, d\phi(\cdot)\right)\nabla^\phi_\cdot \tau(\phi)}\left(\nabla^N R^N\right)\left(d\phi(\cdot),\tau(\phi),d\phi(\cdot),W\right)\right. \nonumber\\
 &\qquad\qquad\left.+2R^N\left(W, d\phi(\cdot)\right)\nabla^\phi_{\left(\cdot\right)} \tau(\phi)+2R^N\left(\tau(\phi),d\phi(\cdot)\right)\nabla^\phi_{\left(\cdot\right)} W\right\},
\end{align}
where $J^2(W)=J(J(W))$. 

It is easy to check that each of the first three terms on the right-hand side of \eqref{jacobi-bienergy} is $L^2$-selfadjoint. In order to prove the selfadjointness of the sum of the last three terms with respect to the $L^2$-inner product, by standard computations using the first Bianchi identity, we notice that
\begin{align*}
&\langle\tr \vphantom{R^N\left(W, d\phi(\cdot)\right)\nabla^\phi_\cdot \tau(\phi)}\left(\nabla^N R^N\right)\left(d\phi(\cdot),\tau(\phi),d\phi(\cdot),W\right),V\rangle = \\
&= \Div Y + \sum_{i=1} ^m \left\{ \langle R^N\left( d\phi(X_i), \nabla^\phi_{X_i}\tau(\phi)\right)W, V \rangle+\langle R^N\left(d\phi (X_i), \tau(\phi)\right)W, \nabla^\phi_{X_i}V \rangle \right. \\
&\qquad\qquad\qquad\quad-\left.\langle R^N(\tau(\phi),d\phi(X_i))\nabla^\phi_{X_i}W, V\rangle \right\} \\
&= \Div Y + \sum_{i=1} ^m \left\{ \langle R^N \left(W, \nabla^\phi_{X_i}\tau(\phi)\right)d\phi(X_i),V\rangle-\langle R^N(W,d\phi(X_i))\nabla^\phi_{X_i}\tau(\phi),V\rangle\right. \\
&\qquad\qquad\qquad\quad+\left.\langle R^N(d\phi (X_i), \tau(\phi))W, \nabla^\phi_{X_i}V \rangle-\langle R^N(\tau(\phi),d\phi(X_i))\nabla^\phi_{X_i}W, V\rangle \right\},
\end{align*}
where 
$$
Y=-\tr\left\langle R^N(d\phi(\cdot),\tau(\phi))W,V\right\rangle (\cdot).
$$
Thus, 
\begin{align*}
&\langle\tr \left\{\vphantom{R^N\left(W, d\phi(\cdot)\right)\nabla^\phi_\cdot \tau(\phi)}\left(\nabla^N R^N\right)\left(d\phi(\cdot),\tau(\phi),d\phi(\cdot),W\right)+2R^N\left(W, d\phi(\cdot)\right)\nabla^\phi_{(\cdot)} \tau(\phi)+2R^N\left(\tau(\phi),d\phi(\cdot)\right)\nabla^\phi_{(\cdot)} W\right\},V\rangle =\\
&= \Div Y + \sum_{i=1} ^m \left\{ R^N \left(W, \nabla^\phi_{X_i}\tau(\phi),V,d\phi(X_i)\right)
+R^N\left(V,\nabla^\phi_{X_i}\tau(\phi),W,d\phi(X_i)\right)\right. \\
&\qquad\qquad\qquad\quad+\left. R^N\left(d\phi (X_i), \tau(\phi), \nabla^\phi_{X_i}V,W\right)+R^N\left(\tau(\phi),d\phi(X_i),V,\nabla^\phi_{X_i}W\right) \right\}.
\end{align*}
Further, using Stokes' Theorem, we can state the following.

\begin{Theorem}
Let $\phi:\left(M^m,g\right)\to\left(N^n,h\right)$ be a biharmonic map and consider a two-parameter smooth variation of $\phi$. Then, 
\begin{align*}\label{second-variation-bienergy-1}
\Hess_{E_2}\left(\phi\right)(V,W)&=
	\left.\frac{\partial^2}{\partial s\partial t}\right|_{(s,t)=(0,0)}\left\{E_2\left(\phi_{s,t}\right) \right\} \nonumber \\
	&=\int_{M}\langle J_2(W),V\rangle\ v_g \nonumber\\
	&=\int_{M}\langle J^2(W)-R^N(\tau(\phi), W)\tau(\phi) -\tr \left(\nabla^N R^N\right)\left(\tau(\phi),d\phi(\cdot),W, d\phi(\cdot)\right), V\rangle \nonumber \\
    & \qquad+ \tr\left\{R^N\left(W,\nabla^\phi_{(\cdot)} \tau(\phi),V, d\phi(\cdot)\right)+R^N\left(V,\nabla^\phi_{(\cdot)} \tau(\phi),W, d\phi(\cdot)\right)\right\} \nonumber \\
	&\qquad+ \tr\left\{R^N\left(d\phi\left(\cdot\right),\tau(\phi),\nabla^\phi_{(\cdot)} V, W\right)+R^N\left(d\phi\left(\cdot\right),\tau(\phi),\nabla^\phi_{(\cdot)} W, V\right)\right\} \ v_g
\end{align*}
and therefore $J_2$ is a fourth-order linear elliptic $L^2$-selfadjoint operator with the leading term $\left(\Delta^\phi\right)^2$.
\end{Theorem}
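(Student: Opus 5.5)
The plan is to use the formula for $J_2$ in \eqref{jacobi-bienergy}, which we take as known from Jiang, together with the pointwise identity just established via the first Bianchi identity. That identity handles the three terms of $J_2$ that are not manifestly selfadjoint; the remaining work is to assemble everything into a symmetric integrand and to read off the principal symbol.

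First, by Jiang's second variation formula,
$$
\left.\frac{\partial^2}{\partial s\partial t}\right|_{(s,t)=(0,0)} E_2(\phi_{s,t}) = \int_M \langle J_2(W),V\rangle\, v_g
$$
for a biharmonic $\phi$. The first-variation term $\int_M\langle \tau_2(\phi),\nabla_{\partial s}d\Phi(\partial t)\rangle$ vanishes because $\tau_2(\phi)=0$. This is the only place the biharmonicity hypothesis enters.

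Second, pair $J_2(W)$ with $V$ term by term.
\begin{itemize}
\item $J$ is $L^2$-selfadjoint by \eqref{jacobi-energy}, since $\Delta^\phi$ is selfadjoint and $R^N$ has pair symmetry. Hence $\int\langle J^2W,V\rangle=\int\langle JW,JV\rangle$ is symmetric.
\item The term $\langle R^N(\tau,W)\tau,V\rangle = R^N(\tau,W,V,\tau)$ is symmetric in $V,W$ by pair symmetry.
\item The term $\langle(\nabla^N R^N)(\tau,d\phi(X_i),W,d\phi(X_i)),V\rangle$ is symmetric because $\nabla_\tau R^N$ is still an algebraic curvature tensor, so it enjoys the same pair symmetry.
\item For the last three terms we apply the displayed identity. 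Its right-hand side is $\Div Y$ plus four curvature expressions. Integrating and using Stokes' theorem on the closed manifold $M$ kills $\Div Y$. The four remaining terms form two pairs, each visibly symmetric under $V\leftrightarrow W$. For the third and fourth terms this uses $R^N(\tau,d\phi,V,\nabla W)=R^N(d\phi,\tau,\nabla W,V)$, which follows from the skew-symmetries.
\end{itemize}
Substituting all of this gives the stated expression for $\Hess_{E_2}(\phi)(V,W)$, and hence symmetry of $\Hess_{E_2}$, i.e.\ $L^2$-selfadjointness of $J_2$.

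Third, for the order and ellipticity, note that $J=\Delta^\phi+(\text{zeroth order})$. Therefore $J^2=(\Delta^\phi)^2+(\text{operators of order}\le 2)$. Every other term in $J_2$ is of order at most one in $W$. So $J_2$ is fourth order with principal symbol $|\xi|^4\,\Id$, and hence elliptic.

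The main obstacle is the Bianchi-identity manipulation producing the displayed formula with the correct divergence term $Y$ and signs. Since the text has already established it, what remains is careful bookkeeping of the index conventions $R(X,Y,Z,V)=g(R(X,Y)V,Z)$ when checking the pairings. I would verify those by writing each term in the form $R^N(a,b,c,d)$ and applying the symmetries $R(a,b,c,d)=-R(b,a,c,d)=R(c,d,a,b)$.
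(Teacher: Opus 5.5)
Your proposal is correct and follows essentially the same route as the paper. The first three terms of $J_2$ are $L^2$-selfadjoint directly, via the selfadjointness of $J$ and the pair symmetry of $R^N$ and of $\nabla^N_{\tau(\phi)}R^N$ (which the paper leaves as ``easy to check''), and the remaining three terms are handled by the first-Bianchi-identity computation with divergence term $Y$ followed by Stokes' theorem, with the leading term $(\Delta^\phi)^2$ read off from $J=\Delta^\phi+(\text{order }0)$ exactly as you do.
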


We again note that, even when $\phi$ is not biharmonic, we shall still denote by $J_2$ the differential operator defined by \eqref{jacobi-bienergy}.

When the map $\phi$ is not biharmonic, the second variation formula of the bienergy functional reads
\begin{equation}\label{second-variation-bienergy}
\left.\frac{\partial^2}{\partial s\partial t}\right|_{(s,t)=(0,0)}\left\{E_2\left(\phi_{s,t}\right) \right\}=\int_{M}\langle J_2(W),V\rangle+\langle \left.\nabla^{\Phi}_{\partial t}V_t\right|_{(s,t)=(0,0)}, \tau_2(\phi)\rangle\ v_g, 
\end{equation}
where
$$
V_t(p)=\left.\frac{d}{ds}\right|_{s=0}\left\{\phi_{s,t}(p)\right\}=d\Phi_{(0,t,p)}\left(\partial s\right).
$$
In order to compute the Jacobi operator $J_2^c$ of the c-bienergy functional $E_2^c$ associated with a c-biharmonic map $\phi$, it suffices to determine the contribution to the second variation formula of the two additional terms in $E_2^c$.

A straightforward computation shows that
\begin{align*}
	\left.\frac{\partial}{\partial s}\right|_{s=0} \left\{\int_{M} \Scal \left|d\phi_{s,t}\right|^2\ v_g \right\} &=\int_M \left(\partial s\right)_{(0,t)} \left\{\Scal\sum_{i=1}^m\langle d\Phi\left(X_i\right), d\Phi\left(X_i\right)\rangle\right\}\ v_g \nonumber\\
	&=2\int_{M}\Scal \sum_{i=1}^{m}\left.\langle\nabla^{\Phi}_{\partial s} d\Phi\left(X_i\right), d\Phi\left(X_i\right)\rangle\right|_{s=0}\ v_g \nonumber\\
	&=2\int_{M}\Scal \sum_{i=1}^{m}\left.\langle\nabla^{\Phi}_{X_i} d\Phi\left(\partial s\right)+d\Phi\left(\left[\partial s, X_i\right]\right), d\Phi\left(X_i\right)\rangle\right|_{s=0}\ v_g \nonumber\\
	&=2\int_{M}\Scal \sum_{i=1}^{m}\langle \nabla^{\phi_{0,t}}_{X_i} V_t,d\phi_{0,t}\left(X_i\right)\rangle\ v_g\nonumber\\
	&=2\int_{M} \Scal \sum_{i=1}^{m}\left\{X_i\langle V_t,d\phi_{0,t}\left(X_i\right)\rangle -\langle V_t, \nabla^{\phi_{0,t}}_{X_i} d\phi_{0,t}\left(X_i\right)\rangle\right\}\ v_g.
 \end{align*}
Thus, we get
\begin{equation}\label{second-variation-2}
\left.\frac{\partial}{\partial s}\right|_{s=0} \left\{\int_{M} \Scal \left|d\phi_{s,t}\right|^2\ v_g \right\} =-2\int_{M} \Scal\left(\langle V_t, \tau(\phi_{0,t})\rangle-\sum_{i=1}^{m}X_i\langle V_t,d\phi_{0,t}\left(X_i\right)\rangle\right)\ v_g.
\end{equation}
We next introduce the vector field 
$$
Y_{1,t}=\Scal\tr\langle V_t, d\phi_{0,t}(\cdot)\rangle(\cdot).
$$
A direct computation yields 
\begin{equation}\label{div-1}
\Div Y_{1,t}=\langle V_t,d\phi_{0,t}(\nabla\Scal)\rangle+\Scal\sum_{i=1}^mX_i\langle V_t,d\phi_{0,t}\left(X_i\right)\rangle.
\end{equation}
Replacing \eqref{div-1} in \eqref{second-variation-2}, we get
\begin{equation*}
	\left.\frac{\partial}{\partial s}\right|_{s=0} \left\{\int_{M} \Scal \left|d\phi_{s,t}\right|^2\ v_g\right\}=-2\int_{M}\langle V_t,d\phi_{0,t}(\nabla\Scal)+\Scal\tau(\phi_{0,t}) \rangle \ v_g.
\end{equation*}
We then differentiate the above identity with respect to $t$ and evaluate at $t=0$, which yields 
\begin{align}\label{contribution-of-first-additional-term}
	&\left.\frac{\partial^2}{\partial s\partial t}\right|_{(s,t)=(0,0)}\left\{\int_{M} \Scal \left|d\phi_{s,t}\right|^2\ v_g \right\}=\nonumber\\
	&=-2\int_{M}\langle \left.\nabla_{\partial t}^{\Phi} V_t\right|_{(s,t)=(0,0)}, d\phi(\nabla \Scal)+\Scal \tau(\phi)\rangle \nonumber\\
	&\qquad\qquad+\langle V,\left.\nabla^\Phi_{\partial t}\left(d\Phi\left(\nabla\Scal\right)+\Scal\tau(\phi_{0,t})\right)\right|_{(s,t)=(0,0)}\rangle \ v_g \nonumber\\
	&=-2\int_{M}\langle \left.\nabla_{\partial t}^{\Phi} V_t\right|_{(s,t)=(0,0)}, d\phi(\nabla \Scal)+\Scal \tau(\phi)\rangle +\langle V,\nabla^\phi_{\nabla \Scal}W-\Scal J(W)\rangle \ v_g.
\end{align}
We now turn to the contribution of the second additional term in $E_2^c$ to the second variation formula. For the geodesic frame field $\left\{X_i\right\}_{i\in\overline{1,m}}$ around an arbitrarily fixed point $p\in M$ we have $\nabla_{\Ric\left(X_i\right)}X_i=0$ at $p$ and thus, a standard computation leads to
\begin{align}\label{second-additional-term-1}
\left.\frac{\partial}{\partial s}\right|_{s=0}&\left\{\int_{M} \tr \langle d\phi_{s,t}(\Ric(\cdot)), d\phi_{s,t}(\cdot)\rangle \ v_g\right\}=\nonumber\\
&=\int_M \left(\partial s\right)_{(0,t)} \left\{\sum_{i=1}^m\langle d\Phi\left(\Ric\left(X_i\right)\right), d\Phi\left(X_i\right)\rangle\right\}\ v_g \nonumber\\
&=\int_M \sum_{i=1}^m \left\{\Ric\left(X_i\right)\langle V_t, d\phi_{0,t}\left(X_i\right)\rangle - 2 \langle V_t, \left(\nabla^{\phi_{0,t}}d\phi_{0,t}\right)\left(\Ric\left(X_i\right),X_i\right)\rangle\right.\nonumber \\
&\quad\qquad\qquad\left. +X_i\langle V_t, d\phi_{0,t}\left(\Ric\left(X_i\right)\right)\rangle-\langle V_t, d\phi_{0,t}\left(\left(\nabla\Ric\right)\left(X_i,X_i\right)\right)\rangle\vphantom{\left(\nabla^{\phi_{0,t}}d\phi_{0,t}\right)}\right\}\ v_g.
\end{align}
We define the vector fields 
$$
Y_{2,t}=\tr \langle V_t, d\phi_{0,t}(\cdot)\rangle \Ric(\cdot)
$$
and 
$$
Y_{3,t}=\tr\langle V_t, d\phi_{0,t}\left(\Ric(\cdot)\right)\rangle(\cdot).
$$
Using that 
\begin{align*}
	\sum_{i,j=1}^m \langle \left(\nabla\Ric\right)\left(X_i,X_j\right),X_i\rangle X_j&= \sum_{i,j=1}^m \langle \nabla_{X_i}\Ric\left(X_j\right),X_i\rangle X_j\\
	&=\sum_{i,j=1}^m X_i\langle\Ric\left(X_j\right), X_i\rangle X_j \\
	&=\sum_{i,j=1}^m X_i\langle\Ric\left(X_i\right), X_j\rangle X_j \\
	&=\sum_{i,j=1}^m\langle\left(\nabla\Ric\right)\left(X_i,X_i\right), X_j\rangle X_j \\
	&=\sum_{i=1}^m \left(\nabla\Ric\right)\left(X_i,X_i\right)
\end{align*}
at the point $p$, we derive
$$
\Div Y_{2,t}=\sum_{i=1}^m\left\{\Ric\left(X_i\right)\langle V_t, d\phi_{0,t}\left(X_i\right)\rangle+\langle V_t, d\phi_{0,t}\left(\left(\nabla\Ric\right)\left(X_i,X_i\right)\right)\rangle\right\}
$$
and
$$
\Div Y_{3,t}=\sum_{i=1}^m X_i\langle V_t, d\phi_{0,t}\left(\Ric\left(X_i\right)\right)\rangle.
$$
Replacing the above two relations in \eqref{second-additional-term-1} one obtains
\begin{align*}
	\left.\frac{\partial}{\partial s}\right|_{s=0} &\left\{\int_{M} \tr \langle d\phi_{s,t}(\Ric(\cdot)), d\phi_{s,t}(\cdot)\rangle \ v_g\right\}=\\
	&=-2\int_M \sum_{i=1}^m \langle V_t,\left(\nabla^{\phi_{0,t}}d\phi_{0,t}\right)\left(\Ric\left(X_i\right),X_i\right)+d\phi_{0,t}\left(\left(\nabla\Ric\right)\left(X_i,X_i\right)\right)\rangle\ v_g.
\end{align*}
Differentiating the above identity once more with respect to $t$, evaluating at $t=0$, and using the formula
$$
\Div\Ric=\frac{1}{2}\nabla\Scal,
$$
after some computations, one achieves
\begin{align}\label{contribution-of-the-second-additional-term}
	\left.\frac{\partial^2}{\partial s\partial t}\right|_{(s,t)=(0,0)}&\left\{\int_{M} \tr \langle d\phi_{s,t}\left(\Ric(\cdot)\right),d\phi_{s,t}(\cdot)\rangle\ v_g \right\}=\nonumber\\
	&=-2\int_{M} \langle \left.\nabla_{\partial t}^{\Phi} V_t\right|_{(s,t)=(0,0)}, \tr\left(\nabla^\phi d\phi\right)\left(\Ric(\cdot),\cdot\right)+\frac{1}{2}d\phi(\nabla \Scal)\rangle\nonumber\\
	&\qquad\qquad+\langle V,\tr \left(\left(\nabla^\phi\right)^2 W\right)\left(\Ric(\cdot),\cdot\right)-\tr R^N\left(d\phi\left(\Ric(\cdot)\right),W\right),d\phi(\cdot) \rangle\nonumber\\
	&\qquad\qquad +\frac{1}{2}\langle V,\nabla^\phi_{\nabla \Scal} W\rangle 
	\ v_g.
\end{align}
Using \eqref{c-bienergy}, \eqref{second-variation-bienergy}, \eqref{contribution-of-first-additional-term} and \eqref{contribution-of-the-second-additional-term}, we conclude that the second variation formula of the c-bienergy functional \eqref{c-bienergy} is given by
\begin{equation}\label{second-variation-c-bienergy}
	\left.\frac{\partial^2}{\partial s\partial t}\right|_{(s,t)=(0,0)}\left\{E_2^c\left(\phi_{s,t}\right) \right\}=\int_{M}\langle J_2^c(W),V\rangle+\langle \left.\nabla^{\Phi}_{\partial t}V_t\right|_{(s,t)=(0,0)}, \tau_2^c(\phi)\rangle\ v_g, 
\end{equation}
where $J_2^c$ represents the Jacobi operator of the c-bienergy functional and it is made up by the terms computed above. More precisely, we conclude with the following result.

\begin{Theorem}
Let $\phi:\left(M^m,g\right)\to\left(N^n,h\right)$ be a c-biharmonic map with $M$ compact. Then, the Jacobi operator of $E_2^c$ is given by 
\begin{align}\label{jacobi-c-bienergy}
	J_2^c(W)&=J_2(W)-2\tr R^N\left(d\phi\left(\Ric(\cdot)\right),W\right)d\phi(\cdot)  \nonumber\\
	&\quad +\frac{2}{3}\Scal J(W)+2\tr\left(\left(\nabla^\phi\right)^2 W\right)\left(\Ric(\cdot),\cdot\right)+\frac{1}{3}\nabla^\phi_{\nabla\Scal}W,
\end{align}
where $J$ and $J_2$ are formally the Jacobi operators of $E$ and $E_2$, respectively.
\end{Theorem}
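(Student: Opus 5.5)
The plan is to assemble $J_2^c$ directly from the pieces already computed. We start from the decomposition \eqref{c-bienergy},
$$
E_2^c(\phi)=E_2(\phi)+\frac{1}{3}\int_M\Scal|d\phi|^2\ v_g-\int_M\tr\langle d\phi(\Ric(\cdot)),d\phi(\cdot)\rangle\ v_g .
$$
We then take $\partial^2/\partial s\partial t$ at $(0,0)$ of each of the three summands.

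For $E_2$ we use \eqref{second-variation-bienergy}, which produces $\int\langle J_2(W),V\rangle$ plus the term $\langle\nabla^\Phi_{\partial t}V_t,\tau_2(\phi)\rangle$. For the scalar curvature term we use $\frac13$ times \eqref{contribution-of-first-additional-term}. For the Ricci term we use $-1$ times \eqref{contribution-of-the-second-additional-term}.

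The first step is to collect all terms involving $\nabla^\Phi_{\partial t}V_t|_{(0,0)}$. Their coefficients are $\tau_2(\phi)$, then $-\frac23\left(d\phi(\nabla\Scal)+\Scal\tau(\phi)\right)$, then $+2\tr(\nabla^\phi d\phi)(\Ric(\cdot),\cdot)+d\phi(\nabla\Scal)$. These sum to
$$
\tau_2(\phi)-\frac23\Scal\tau(\phi)+2\tr(\nabla^\phi d\phi)(\Ric(\cdot),\cdot)+\frac13 d\phi(\nabla\Scal)=\tau_2^c(\phi).
$$
This gives \eqref{second-variation-c-bienergy}, and the term vanishes since $\phi$ is c-biharmonic.

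The second step is to collect the terms paired with $V$. From the scalar curvature piece we get $-\frac23\langle V,\nabla^\phi_{\nabla\Scal}W-\Scal J(W)\rangle$. From the Ricci piece we get
$$
+2\langle V,\tr((\nabla^\phi)^2W)(\Ric(\cdot),\cdot)-\tr R^N(d\phi(\Ric(\cdot)),W)d\phi(\cdot)\rangle+\langle V,\nabla^\phi_{\nabla\Scal}W\rangle .
$$
The gradient terms combine as $-\frac23+1=\frac13$, which yields exactly the stated formula \eqref{jacobi-c-bienergy}.

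The main obstacle is that \eqref{contribution-of-the-second-additional-term} was stated only "after some computations", so I would verify it. The approach is to differentiate
$$
-2\int\langle V_t,(\nabla d\phi_{0,t})(\Ric X_i,X_i)+d\phi_{0,t}((\nabla\Ric)(X_i,X_i))\rangle
$$
in $t$. The key identity is
$$
\nabla^\Phi_{\partial t}(\nabla d\Phi)(X,Y)=(\nabla^2W)(X,Y)+R^N(W,d\phi X)d\phi Y,
$$
where $R^N$ appears by commuting $\nabla_{\partial t}$ with $\nabla_X$ and using $[\partial t,X]=0$. Together with $\sum_i(\nabla\Ric)(X_i,X_i)=\Div\Ric=\frac12\nabla\Scal$, this gives the expression, and the curvature sign reverses under the antisymmetry $R^N(W,d\phi X)=-R^N(d\phi X,W)$.

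Finally, the bracket $\langle\cdot,\cdot\rangle$ in \eqref{contribution-of-the-second-additional-term} appears misplaced and should be read as $-\tr R^N(d\phi(\Ric(\cdot)),W)d\phi(\cdot)$ paired with $V$. Once these corrections are in place, the claimed formula for $J_2^c$ follows. Its symmetry is inherited from the symmetry of the Hessian.
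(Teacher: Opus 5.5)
Your proposal is correct and is essentially the paper's own argument. You split $E_2^c$ into $E_2$, $\frac13\int_M\Scal|d\phi|^2\,v_g$ and $-\int_M\tr\langle d\phi(\Ric(\cdot)),d\phi(\cdot)\rangle\,v_g$, then combine the three second variation formulas: the $\nabla^\Phi_{\partial t}V_t$-coefficients add up to $\tau_2^c(\phi)=0$, and the gradient terms combine as $-\frac23+1=\frac13$. Your commutation identity $\nabla^\Phi_{\partial t}(\nabla d\Phi)(X,Y)=(\nabla^2W)(X,Y)+R^N(W,d\phi X)d\phi Y$, together with $\Div\Ric=\frac12\nabla\Scal$, correctly fills in the computation the paper omits for the Ricci term, and you are right that the bracket in that formula is misplaced.
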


The first two terms of the right-hand side of \eqref{jacobi-c-bienergy} are each $L^2$-selfadjoint. In order to prove the selfadjointness of the sum of the last three terms with respect to the $L^2$-inner product, we first recall that
$$
\langle J(W),V\rangle = \langle \nabla^\phi W, \nabla^\phi V\rangle+\langle \tr R^N\left(d\phi(\cdot),W\right)d\phi(\cdot),V\rangle-\Div Z, 
$$
where
$$
Z=\tr \langle \nabla^\phi_{\left (\cdot\right )} W, V \rangle \left (\cdot\right ).
$$
Then, by standard computations, we obtain
\begin{align*}
   \frac{2}{3} \langle \Scal J(W), V \rangle &= \frac{2}{3} \Scal \langle \nabla^\phi W, \nabla^\phi V\rangle+\frac{2}{3}\Scal \langle \tr R^N\left(d\phi(\cdot),W\right)d\phi(\cdot),V\rangle\\
   &\quad+\frac{2}{3}\langle \nabla^\phi_{\nabla \Scal} W,V\rangle-\frac{2}{3}\Div(\Scal Z).   
\end{align*}
For the fourth term of the right-hand side of \eqref{jacobi-c-bienergy}, we first have
\begin{align} \label{jacobi-c-bienergy-4-1}
   \langle 2 \tr \left ( \left ( \nabla ^\phi \right )^2 W \right ) (\Ric (\cdot), \cdot), V \rangle &= 2 \sum_{i=1}^m \langle \nabla^\phi_{\Ric (X_i)} \nabla^\phi_{X_i} W - \nabla^\phi_{\nabla_{\Ric (X_i)} X_i} W, V \rangle \nonumber\\
   &= 2 \sum_{i=1}^m \langle \nabla^\phi_{\Ric (X_i)} \nabla^\phi_{X_i} W, V \rangle \nonumber\\
   &= 2 \sum_{i=1}^m \left \{ \Ric (X_i) \langle \nabla^\phi_{X_i} W, V \rangle - \langle \nabla^\phi _{X_i} W, \nabla^\phi_{\Ric (X_i)} V \rangle \right \}.
\end{align}
Next, a direct computation shows that
\begin{align*}
  \Div(\Ric (Z))  &= \sum_{i=1}^m \Ric \left ( X_i \right ) \langle \nabla^\phi_{X_i} W, V \rangle + \sum_{i,k=1}^m \langle \nabla^\phi_{X_k} W, V \rangle \langle \left ( \nabla \Ric \right ) \left ( X_i, X_i \right ), X_k \rangle \nonumber\\
  &= \sum_{i=1}^m \Ric \left ( X_i \right ) \langle \nabla^\phi_{X_i} W, V \rangle + \frac 1 2 \sum_{i=1}^m \langle \nabla^\phi _{X_i} W, V \rangle \langle \nabla \Scal, X_i \rangle. \nonumber 
\end{align*}
Thus,
\begin{equation} \label{jacobi-c-bienergy-4-2}
  \sum_{i=1}^m \Ric \left ( X_i \right ) \langle \nabla^\phi_{X_i} W, V \rangle = \Div(\Ric (Z)) - \frac{1}{2} \langle \nabla^\phi _{\nabla \Scal} W, V \rangle.
\end{equation}
Replacing \eqref{jacobi-c-bienergy-4-2} in \eqref{jacobi-c-bienergy-4-1}, one gets
\begin{align*}
	\langle 2 \tr \left ( \left ( \nabla ^\phi \right )^2 W \right ) (\Ric (\cdot), \cdot), V \rangle &= 2 \Div(\Ric (Z)) - \langle \nabla^\phi _{\nabla \Scal} W, V \rangle-2\tr\langle \nabla^\phi_{(\cdot)} W,\nabla^\phi_{\Ric(\cdot)}V\rangle.
\end{align*}
Thus,
\begin{align*}
&\langle\frac{2}{3}\Scal J(W)+2\tr\left(\left(\nabla^\phi\right)^2 W\right)\left(\Ric(\cdot),\cdot\right)+\frac{1}{3}\nabla^\phi_{\nabla\Scal}W, V\rangle =  \\
&=\frac{2}{3} \Scal \langle \nabla^\phi W, \nabla^\phi V\rangle+ \frac{2}{3}\Scal\langle\tr R^N\left(d\phi\left(\Ric(\cdot)\right),W\right)d\phi(\cdot), V\rangle -\frac{2}{3}\Div(\Scal Z)+2 \Div(\Ric (Z))\\
&\quad -2\tr\langle \nabla^\phi_{(\cdot)} W,\nabla^\phi_{\Ric(\cdot)}V\rangle.
\end{align*}
Further, using Stokes' Theorem, we can state the following.
\begin{Theorem}
Let $\phi:\left(M^m,g\right)\to\left(N^n,h\right)$ be a c-biharmonic map and consider a two-parameter smooth variation of $\phi$. Then, 
\begin{align*}\label{second-variation-bienergy-1}
\Hess_{E_2^c}\left(\phi\right)(V,W)&=
	\left.\frac{\partial^2}{\partial s\partial t}\right|_{(s,t)=(0,0)}\left\{E_2^c\left(\phi_{s,t}\right) \right\} \nonumber \\
	&=\int_{M}\langle J_2^c(W),V\rangle\ v_g \nonumber\\
	&=\int_{M}\langle J_2(W)-2\tr R^N\left(d\phi\left(\Ric(\cdot)\right),W\right)d\phi(\cdot), V\rangle \nonumber \\
    & \qquad + \frac{2}{3} \Scal \langle\tr R^N\left(d\phi\left(\Ric(\cdot)\right),W\right)d\phi(\cdot), V\rangle \nonumber \\
    & \qquad + \frac{2}{3} \Scal \langle\nabla^\phi W, \nabla^\phi V\rangle - 2 \tr \langle \nabla ^\phi _{\left (\cdot \right )} W, \nabla ^\phi _{\Ric (\cdot)} V \rangle \ v_g
\end{align*}
and therefore $J_2^c$ is a fourth-order linear elliptic $L^2$-selfadjoint operator with the leading term $\left(\Delta^\phi\right)^2$.
\end{Theorem}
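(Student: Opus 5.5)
The plan is to start from the general second variation formula \eqref{second-variation-c-bienergy}, which holds for an arbitrary smooth map $\phi$ and any two-parameter variation. Since $\phi$ is assumed c-biharmonic, $\tau_2^c(\phi)=0$, so the term $\langle \nabla^{\Phi}_{\partial t}V_t|_{(s,t)=(0,0)},\tau_2^c(\phi)\rangle$ drops out. This gives immediately $\Hess_{E_2^c}(\phi)(V,W)=\int_M\langle J_2^c(W),V\rangle\,v_g$, with $J_2^c$ as in \eqref{jacobi-c-bienergy}.

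Next I will split $J_2^c$ into three groups of terms and rewrite each pointwise as a manifestly symmetric expression in $(V,W)$ plus a divergence.
\begin{itemize}
\item[(a)] $J_2(W)$. Here I reuse the pointwise identity established before the Hessian theorem for $E_2$. That computation only used the first Bianchi identity and the definition of $Y$, and nowhere used $\tau_2(\phi)=0$. So it remains valid for our $\phi$, which is c-biharmonic but not necessarily biharmonic. It writes $\langle J_2(W),V\rangle$ as the symmetric integrand of that theorem plus $\Div Y$.
\item[(b)] $-2\tr R^N(d\phi(\Ric(\cdot)),W)d\phi(\cdot)$. This is zeroth order. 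It is pointwise symmetric in $V,W$ by the pair symmetry of $R^N$ together with the symmetry of $\Ric$.
\item[(c)] The last three terms. I combine the identity for $\langle J(W),V\rangle$ (multiplied by $\tfrac23\Scal$) with \eqref{jacobi-c-bienergy-4-1} and \eqref{jacobi-c-bienergy-4-2}. The contributions $\tfrac23\langle\nabla^\phi_{\nabla\Scal}W,V\rangle$ and $-\langle\nabla^\phi_{\nabla\Scal}W,V\rangle$ cancel against $\tfrac13\nabla^\phi_{\nabla\Scal}W$. What remains is $\tfrac23\Scal\langle\nabla^\phi W,\nabla^\phi V\rangle$, the curvature term, $-2\tr\langle\nabla^\phi_{(\cdot)}W,\nabla^\phi_{\Ric(\cdot)}V\rangle$, and the divergences $-\tfrac23\Div(\Scal Z)+2\Div(\Ric(Z))$.
\end{itemize}

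Integrating over the closed manifold $M$, Stokes' Theorem kills $\Div Y$, $\Div(\Scal Z)$ and $\Div(\Ric(Z))$. This yields the displayed integral formula.

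For selfadjointness I check that the integrand is symmetric under $V\leftrightarrow W$, term by term:
\begin{itemize}
\item The $E_2$-part is symmetric by the earlier theorem.
\item The curvature terms are symmetric since $R^N(X,Y,Z,U)=R^N(Z,U,X,Y)$ and $\Ric$ is $g$-symmetric.
\item $\Scal\langle\nabla^\phi W,\nabla^\phi V\rangle$ is obviously symmetric.
\item $\tr\langle\nabla^\phi_{(\cdot)}W,\nabla^\phi_{\Ric(\cdot)}V\rangle=\sum_{i,j}\Ric(X_i,X_j)\langle\nabla^\phi_{X_i}W,\nabla^\phi_{X_j}V\rangle$ is symmetric because $\Ric$ is.
\end{itemize}
Hence $\int\langle J_2^c W,V\rangle=\int\langle W,J_2^cV\rangle$ for all sections $V,W$. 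Since $J_2^c$ is a differential operator, this gives $L^2$-selfadjointness.

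Ellipticity and the leading term follow from \eqref{jacobi-c-bienergy}. $J_2(W)=J^2(W)+{}$(terms of order $\le1$ in $W$), and $J^2=(\Delta^\phi)^2+{}$(lower order). The added terms are at most of second order. So the principal symbol is $|\xi|^4\,\Id$.

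The main obstacle I expect is the bookkeeping in step (c). One has to verify that the $\nabla\Scal$-terms cancel exactly, using $\Div\Ric=\tfrac12\nabla\Scal$ and the geodesic frame at $p$ so that $\nabla_{\Ric(X_i)}X_i=0$. One also has to track precisely which curvature term with $\Scal$ survives.
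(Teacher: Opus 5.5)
Your proposal is correct and matches the paper's own argument step by step: drop the $\tau_2^c(\phi)$-term in \eqref{second-variation-c-bienergy}, use that $J_2$ and $-2\tr R^N(d\phi(\Ric(\cdot)),W)d\phi(\cdot)$ are each selfadjoint, and rewrite the last three terms of \eqref{jacobi-c-bienergy} via the formula for $\langle J(W),V\rangle$, \eqref{jacobi-c-bienergy-4-1} and \eqref{jacobi-c-bienergy-4-2}, so that the $\nabla\Scal$-terms cancel and the divergences vanish by Stokes. On the bookkeeping point you flagged: the surviving curvature term comes from $\frac{2}{3}\Scal J(W)$, and $J(W)$ contains no $\Ric$, so the term is $\frac{2}{3}\Scal\langle\tr R^N(d\phi(\cdot),W)d\phi(\cdot),V\rangle$; the $\Ric$ inside that term in the displayed formula (and in the paper's intermediate display) is a misprint, so your computation gives the corrected formula rather than the literal one.
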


From the theory of linear elliptic operators on compact manifolds, it is well known that the eigenspaces of $J_2^c$ are finite dimensional and consist of smooth sections. The Hilbert space of $L^2$-sections in $\phi^{-1}TN$ is the closure of the (infinite and $L^2$-orthogonal) sum of the eigenspaces of $J_2^c$, and the spectrum of $J_2^c$ consists of a discrete sequence of real numbers, bounded below by the first eigenvalue. Thus, the index of $\Hess_{E_2^c}\left(\phi\right)$, or the \emph{conformal-biharmonic index} of $\phi$, is finite, where the conformal-biharmonic index of $\phi$ is the dimension of the largest subspace of $C\left(\phi^{-1}TN\right)$ on which $\Hess_{E_2^c}\left(\phi\right)$ is negative definite; and the \emph{conformal-biharmonic nullity} of $\phi$, which is the dimension of the kernel of $J_2^c$, is also finite.

\section{c-biharmonic stability of the identity map of Einstein manifolds}

We consider $M^m$ to be an Einstein manifold of nonnegative scalar curvature with $m\geq 4$  and $\phi$ to be the identity map $\Id:\left(M,g\right)\to\left(M,g\right)$. Thus,
$$
\Ric=\lambda g, \qquad \Scal=m\lambda, \qquad \lambda\geq0,
$$ 
and
$$
\nabla^{\Id}=\nabla, \qquad \Delta^{\Id}X=-\tr \left(\nabla^2 X\right)\left(\cdot,\cdot\right),
$$
where $X$ is a vector field on $M$, i.e., $X\in C(TM)$.

The identity map $\Id$ is harmonic, i.e., $\tau(\Id)=0$, and we can check that the Jacobi operators of the energy, bienergy and c-bienergy functionals, respectively, are given by 
$$
J(X)=\Delta_H X-2\lambda X, \qquad J_2(X)=J^2(X),
$$
and 
$$
J_2^c(X)=J^2(X)+\frac{2}{3}(m-3)\lambda J(X).
$$
Here, $\Delta_H$ is the Hodge Laplacian acting on vector fields and in the case of Einstein manifolds it is defined by
$$
\Delta_H X=-\tr \left(\nabla^2 X\right)\left(\cdot,\cdot\right)+\lambda X.
$$
We also recall that the set of all vector fields on $M$ can be written as the $L^2$-orthogonal sum
\begin{equation}\label{CTM-decomposition}
C(TM)=\left\{\nabla\alpha\ |\ \alpha\in C^\infty(M)\right\}\oplus\left\{X\in C(TM)\ |\ \Div X=0 \right\}.
\end{equation}
The case $m=3$ or the Ricci-flat case are trivial for our study as $J_2^c=J^2$ and thus, the identity map of a three-dimensional Einstein manifold is stable as a c-biharmonic map. From now on, we shall study the c-biharmonic stability of the identity map of a compact Einstein manifold of positive scalar curvature and with $m\geq 4$.

Assume that $\Delta_H X=\mu X$, with $\mu>0$. Thus, we have
$$
J(X)=(\mu-2\lambda)X, \qquad J_2(X)= (\mu-2\lambda)^2 X,
$$
and
$$
J_2^c(X)= (\mu-2\lambda)\left(\mu-\frac{2}{3}(6-m)\lambda\right)X.
$$
It follows that $J_2^c$ preserves the subspaces of the decomposition \eqref{CTM-decomposition} and we need to analyze two cases.
\begin{enumerate}
\item If $X=\nabla \alpha$, for \(\alpha\in C^\infty(M)\) with $\Delta \alpha=\mu \alpha$ and $\mu>0$, we deduce that 
$$
(\mu-2\lambda)\left(\mu-\frac{2}{3}(6-m)\lambda\right) < 0
$$
if and only if one of the following holds
\begin{itemize}
	\item[i)] $m=4$ and $\mu\in\left(4\lambda/3,2\lambda\right)$;
	\item[ii)] $m=5$ and $\mu\in\left(2\lambda/3,2\lambda\right)$;
	\item[iii)] $m\geq 6$ and $\mu\in (0,2\lambda)$.
\end{itemize}
Thus, if $m\geq 6$ the harmonic and c-biharmonic index coincide. For $m=4$ or $m=5$, the harmonic index could be greater than the c-biharmonic index, as there could exist $\mu\in\left(0,4\lambda/3\right]$ or $\mu\in\left(0,2\lambda/3\right]$, respectively. But, using the Lichnerowicz-Obata inequality (see \cite{Lichnerowicz1958, Obata1962}), i.e., 
$$
\mu\geq \frac{m}{m-1}\lambda,
$$ 
with equality if and only if $M$ is the round sphere of radius $\sqrt{(m-1)/\lambda}$, we conclude that this case occurs only when $m=4$. In this particular dimension, there exists $\mu=4\lambda/3$ which contributes to the $\Index_{E}\left(\Id\right)$ but not to the $\Index_{E_2^c}\left(\Id\right)$.
\item If $\Div X=0$, then $\mu\geq 2\lambda>0$ (see \cite{MR0375386} and also \cite{MR1252178}) and it follows that for any $m\geq 4$, we have
$$
(\mu-2\lambda)\left(\mu-\frac{2}{3}(6-m)\lambda\right) \geq 0.
$$
\end{enumerate}
We now conclude with the following result which is the main result of the article.

\begin{Theorem}\label{main-theorem}
Let $\left(M^m,g\right)$ be a compact Einstein manifold of positive scalar curvature with $m\geq 4$ and consider $\Id:\left(M,g\right)\to \left(M,g\right)$ to be the identity map. Then,
$$
\Index_{E_2^c}\left(\Id\right)=\Index_{E}\left(\Id\right) \qquad \text{and} \qquad
\Nullity_{E_2^c} \left(\Id\right)=\Nullity_{E_2} \left(\Id\right),
$$
with only one exception, namely up to a rescaling of the round metric, $\left(M,g\right)=\mathbb{S}^4$.
In this case,
$$
\Index_{E_2^c}\left(\Id\right)=0<\Index_{E}\left(\Id\right)=5 \qquad \text{and} \qquad \Nullity_{E_2^c}\left(\Id\right)=15>\Nullity_{E}\left(\Id\right)=10.
$$
\end{Theorem}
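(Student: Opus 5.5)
The plan is to simultaneously diagonalize the three Jacobi operators on a common eigenbasis and then count sign changes eigenvalue by eigenvalue. On an Einstein manifold $J=\Delta_H-2\lambda$, $J_2=J^2$ and $J_2^c=J^2+\frac23(m-3)\lambda J$ are all polynomials in the Hodge Laplacian $\Delta_H$. Since $\Delta_H$ is elliptic and self-adjoint, $L^2(TM)$ splits orthogonally into its finite-dimensional eigenspaces $E_\mu$. Each $E_\mu$ further splits according to \eqref{CTM-decomposition}, because $\Delta_H$ commutes with $d$ and $d^*$ on forms. On $E_\mu$ the three operators act by $\mu-2\lambda$, $(\mu-2\lambda)^2$ and $(\mu-2\lambda)(\mu-\frac23(6-m)\lambda)$ respectively. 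We should also treat $\mu=0$, i.e. harmonic $1$-forms; by Bochner with $\lambda>0$ there are none, so every $\mu>0$.

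Index and nullity are then sums of $\dim E_\mu$ over the relevant $\mu$:
\[
\Index_E=\sum_{\mu<2\lambda}\dim E_\mu,\qquad \Nullity_{E_2}=\Nullity_E=\dim E_{2\lambda}.
\]
For $J_2^c$ the index counts $\mu$ strictly between the roots $2\lambda$ and $\frac23(6-m)\lambda$. The nullity counts $\mu=2\lambda$ and $\mu=\frac23(6-m)\lambda$ (the latter only when $m<6$).

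\textbf{Divergence-free part.} Here $\mu\ge 2\lambda$, so neither the index nor any extra nullity arises; only $\mu=2\lambda$ contributes, and it does so to both nullities. For $m\ge 6$ the second root is $\le0$, so the index sets coincide exactly and the nullities are both $\dim E_{2\lambda}$.

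\textbf{Gradient part, $m=4,5$.} Let $\nu$ be the smallest positive eigenvalue of the scalar Laplacian. By Lichnerowicz--Obata, $\nu\ge\frac{m}{m-1}\lambda$, and $\nabla$ maps scalar eigenfunctions of $\Delta$ isomorphically onto gradient eigenfields of $\Delta_H$ with the same eigenvalue.
\begin{itemize}
\item For $m=5$: $\nu\ge \frac54\lambda>\frac23\lambda$, so every gradient eigenvalue below $2\lambda$ lies in $(\frac23\lambda,2\lambda)$ and the second root is never attained.
\item For $m=4$: $\nu\ge\frac43\lambda$, which equals the second root. Hence discrepancies arise only if $\frac43\lambda$ is an eigenvalue, and by the equality case of Obata this happens iff $M$ is a round sphere of radius $\sqrt{3/\lambda}$.
\end{itemize}
So outside this case the index sets and nullity sets agree, which gives the first claim.

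\textbf{Round $\mathbb S^4$ after rescaling.} Scale to $\lambda=3$, so the spectrum of $\Delta$ on $\mathbb S^4$ is $k(k+3)$. The second root is then $4$, with multiplicity $5$ (first spherical harmonics). Among gradient fields the only eigenvalue below $2\lambda=6$ is $\mu=4$, so it contributes to $\Index_E$ but not to $\Index_{E_2^c}$; it contributes to $\Nullity_{E_2^c}$ instead. The divergence-free fields contribute nothing below $6$. Thus $\Index_{E_2^c}=0$, and $\Index_E=5$ by Theorem \ref{stability-identity-harmonic}.

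For the nullities, the next gradient eigenvalue is $10>6$, so $\dim E_{2\lambda}$ consists only of divergence-free fields with $\Delta_H X=2\lambda X$. These are exactly the Killing fields, of dimension $10$, which matches Theorem \ref{stability-identity-harmonic}. Hence $\Nullity_{E_2^c}=10+5=15$, consistent with Theorem \ref{stability-identity} ii).

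The main obstacle is justifying the spectral decomposition cleanly. One must know that $\Delta_H$ preserves \eqref{CTM-decomposition}, that the bound $\mu\ge2\lambda$ holds on the divergence-free part, and that $\dim E_{2\lambda}$ consists only of Killing fields in the sphere count; the last two are cited from \cite{MR0375386}. Everything else is bookkeeping of the sign of a quadratic in $\mu$.
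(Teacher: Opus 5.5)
Your proposal is correct and follows essentially the same route as the paper. Both arguments diagonalize $J_2^c=(\Delta_H-2\lambda)\bigl(\Delta_H-\frac23(6-m)\lambda\bigr)$ on the $\Delta_H$-eigenspaces, split them into gradient and divergence-free parts, use $\mu\ge 2\lambda$ on the latter and Lichnerowicz--Obata on the former to isolate the round $\mathbb{S}^4$; your explicit exclusion of $\mu=0$ via Bochner and the Killing-field count giving $10+5=15$ simply spell out details the paper leaves to the cited results.
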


Using \cite[Proposition 2.13]{MR0375386}, we obtain the next Corollary.

\begin{Cor}
Let $\left(M^m,g\right)$ be a classical compact irreducible symmetric space with $m\geq 4$. Then,
$$
\Index_{E_2^c}\left(\Id\right)=0,
$$
with the following exceptions
\begin{itemize}
	\item[i)] $\mathbb{S}^m$, $m\geq 5$.
	\item[ii)] $Sp(p+q)/\left(Sp(p)\times Sp(q)\right)$, $p,q\geq 1$, $(p,q)\neq (1,1)$.
	\item[iii)] $SU(2p)/Sp(p)$, $p\geq 2$.
\end{itemize}
\end{Cor}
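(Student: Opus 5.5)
The plan is to deduce the Corollary directly from Theorem \ref{main-theorem} and the classification of unstable identity maps of symmetric spaces in \cite[Proposition 2.13]{MR0375386}.

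\textbf{Step 1: reduction to the harmonic index.} Every classical compact irreducible symmetric space is Einstein. Its Killing form is negative definite, so the Ricci curvature is positive and hence $\Scal>0$. For $m\geq 4$ we are therefore in the setting of Theorem \ref{main-theorem}, which gives $\Index_{E_2^c}(\Id)=\Index_E(\Id)$ unless $(M,g)$ is a rescaled round $\mathbb{S}^4$. In that exceptional case Theorem \ref{main-theorem} gives $\Index_{E_2^c}(\Id)=0$ directly. This is consistent with the list, since the exception i) only concerns $m\geq 5$.

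\textbf{Step 2: identify the spaces with positive harmonic index.} We invoke Smith's result \cite[Proposition 2.13]{MR0375386}. Among classical compact irreducible symmetric spaces, the identity is $E$-unstable exactly for the following spaces:
\begin{itemize}
\item[a)] $\mathbb{S}^m$ with $m\geq 3$;
\item[b)] the quaternionic Grassmannians $Sp(p+q)/(Sp(p)\times Sp(q))$ with $(p,q)\neq(1,1)$;
\item[c)] $SU(2p)/Sp(p)$ with $p\geq 2$.
\end{itemize}
For all remaining spaces $\Index_E(\Id)=0$, and by Step 1 also $\Index_{E_2^c}(\Id)=0$.

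\textbf{Step 3: match the exceptions.} For the spaces in Step 2, Step 1 gives $\Index_{E_2^c}(\Id)=\Index_E(\Id)>0$, so they remain exceptions, with one removal. The sphere $\mathbb{S}^4$ drops out, because there the $c$-biharmonic index vanishes; the case $\mathbb{S}^3$ is already excluded by $m\geq 4$.

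We also check the borderline cases. The space $Sp(2)/(Sp(1)\times Sp(1))$ is isometric to $\mathbb{S}^4$ up to scaling. This is exactly why $(1,1)$ is excluded in b): Theorem \ref{main-theorem} gives $c$-biharmonic index $0$ there. The space $SU(4)/Sp(2)$ is the round $\mathbb{S}^5$ up to scaling. It is covered by both c) and i), and in either description the index is positive, since $m=5$ is not the exceptional dimension.

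\textbf{Main obstacle.} The delicate point is correctly reading Smith's list and its low-dimensional coincidences. In particular we must confirm that every member of b) with $(p,q)\neq(1,1)$ and of c) with $p\geq 2$ has dimension at least $4$ and is not homothetic to $\mathbb{S}^4$. This follows from the dimension counts $\dim = 4pq$ for b) and $\dim = (p-1)(2p+1)$ for c); the value $4$ is attained only at $(p,q)=(1,1)$ in b) and never in c).
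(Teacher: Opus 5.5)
Your proposal is correct and follows the paper's route: the Corollary comes from combining Theorem \ref{main-theorem} (equality of the two indices for Einstein manifolds of positive scalar curvature, with the rescaled $\mathbb{S}^4$ as the only exception, where the c-biharmonic index is $0$) with Smith's list of classical compact irreducible symmetric spaces whose identity map is unstable, \cite[Proposition 2.13]{MR0375386}. Your checks of the coincidences $Sp(2)/(Sp(1)\times Sp(1))\cong\mathbb{S}^4$ and $SU(4)/Sp(2)\cong\mathbb{S}^5$, and your dimension counts, spell out details the paper leaves implicit.
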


As a final contribution of this paper, we obtain an interesting result concerning
the identity map of $\mathbb{S}^m$ in dimensions at least five. First, we notice that, for $m$ at least four, we have

\begin{Prop}\label{prop-non-negativeE2c}
Let $\phi:\mathbb{S}^m\to N^n$ be a smooth map with $M$ compact and $m\geq 4$. Then, $E_2^c(\phi)\geq 0$, and the equality holds if and only if $\phi$ is a constant map.
\end{Prop}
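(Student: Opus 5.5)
On the unit sphere $\mathbb{S}^m$ we have $\Ric=(m-1)g$ and $\Scal=m(m-1)$. So the plan is to substitute these into \eqref{c-bienergy} and reduce $E_2^c$ to a sum of nonnegative terms. Since $\tr\langle d\phi(\Ric(\cdot)),d\phi(\cdot)\rangle=(m-1)|d\phi|^2$, the zero-order part of the integrand is
$$
\frac{1}{3}m(m-1)|d\phi|^2-(m-1)|d\phi|^2=\frac{(m-1)(m-3)}{3}|d\phi|^2 .
$$
This gives
$$
E_2^c(\phi)=\frac{1}{2}\int_{\mathbb{S}^m}|\tau(\phi)|^2\ v_g+\frac{(m-1)(m-3)}{3}\int_{\mathbb{S}^m}|d\phi|^2\ v_g
=E_2(\phi)+\frac{2(m-1)(m-3)}{3}E(\phi).
$$

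For $m\geq 4$ the coefficient $\frac{2(m-1)(m-3)}{3}$ is strictly positive, and it is positive already for $m\ge 4$. Both $E_2(\phi)$ and $E(\phi)$ are nonnegative, so $E_2^c(\phi)\geq 0$ follows immediately.

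For the equality case, suppose $E_2^c(\phi)=0$. Then both nonnegative summands vanish, and in particular $E(\phi)=0$, so $d\phi\equiv 0$. Since $\mathbb{S}^m$ is connected, $\phi$ is constant. Conversely, a constant map has $d\phi=0$ and $\tau(\phi)=0$, so $E_2^c(\phi)=0$.

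I do not expect a genuine obstacle here. The only point needing care is the sign convention for $\Ric$ and the factor $\frac12$ in \eqref{c-bienergy}, which must be handled correctly to get the coefficient $\frac{(m-1)(m-3)}{3}$. The same argument works on any compact Einstein domain with $\lambda>0$ and $m\ge4$: the coefficient there is $\frac{\lambda(m-3)}{3}>0$.
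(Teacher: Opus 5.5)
Your argument is correct and is essentially the paper's: on the unit sphere the two curvature terms in \eqref{c-bienergy} combine to $\frac{2}{3}(m-1)(m-3)|d\phi|^2$ inside the $\frac12$-integral, which is exactly the reduction used in \eqref{ConformalBienergyFunctional}. Hence $E_2^c=E_2+\frac{2}{3}(m-1)(m-3)E$ with a strictly positive coefficient for $m\geq 4$, and the equality case follows from $E(\phi)=0$ and connectedness. The paper gives no separate written proof of this proposition, but this decomposition is all it relies on.
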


Then, for $m$ at least five, adapting the techniques presented in \cite[Section 5]{MR703510} for harmonic maps, we show the following statement.

\begin{Theorem}\label{prop-e2c-small}
For any $\varepsilon>0$, there exists a map $\phi$ homotopic to the identity map $\Id:\mathbb{S}^m\to\mathbb{S}^m$, $m\geq 5$, with $E_2^c(\phi)<\varepsilon$.
\end{Theorem}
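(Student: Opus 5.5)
We follow the conformal-contraction construction of \cite[Section 5]{MR703510}. Fix the north pole $N\in\mathbb{S}^m$ and, for $t\geq 1$, let $\psi_t=\sigma^{-1}\circ\delta_t\circ\sigma:\mathbb{S}^m\to\mathbb{S}^m$, where $\sigma$ is the stereographic projection from $N$ onto $\mathbb{R}^m$ and $\delta_t(x)=x/t$. Each $\psi_t$ is a conformal diffeomorphism homotopic to $\Id$ (via $t\mapsto\psi_t$, with $\psi_1=\Id$). As $t\to\infty$, $\psi_t$ pushes everything except a shrinking cap around $N$ towards the south pole $S$; equivalently, $\psi_t$ maps the complement of a small ball $B_r(N)$, $r\sim 1/t$, into a small ball around $S$. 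The plan is to show $E_2^c(\psi_t)\to 0$ as $t\to\infty$ when $m\geq 5$. On $\mathbb{S}^m$ we have $\Ric=(m-1)g$ and $\Scal=m(m-1)$, so
$$
E_2^c(\phi)=\frac12\int_{\mathbb{S}^m}|\tau(\phi)|^2+\frac{(m-1)(m-6)}{3}\,|d\phi|^2\ v_g .
$$
Hence it suffices to show that $\int|\tau(\psi_t)|^2\to0$ and $\int|d\psi_t|^2\to 0$.

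\textbf{Step 1: explicit formulas.} A conformal map $\psi$ with $\psi^*g=e^{2u}g$ satisfies $|d\psi|^2=m e^{2u}$ and
$$
\tau(\psi)=-(m-2)\,d\psi(\nabla u),
$$
so that $|\tau(\psi)|^2=(m-2)^2e^{2u}|\nabla u|^2$. For $\psi_t$ the conformal factor is explicit. Writing $\theta$ for the distance from $S$, one gets
$$
e^{u_t}=\frac{2t}{(1+\cos\theta)+t^2(1-\cos\theta)}
$$
(or an equivalent expression). This factor is of order $t$ near $S$ and of order $1/t$ away from $S$; note that with this convention the blow-up happens near $S$, and we choose the orientation of $\delta_t$ so that the contraction is as desired. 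Then $|\nabla u_t|$ is of order $t$ in the region $\theta\lesssim 1/t$ and of order $1/\theta$ outside it.

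\textbf{Step 2: estimates.} We split $\mathbb{S}^m$ into the cap $\{\theta<1/t\}$ and its complement. On the cap, the integrand $|d\psi_t|^2$ is $O(t^2)$ and $|\tau|^2$ is $O(t^4)$, while the volume is $O(t^{-m})$. This gives contributions $O(t^{2-m})$ and $O(t^{4-m})$, both of which tend to $0$ precisely when $m\geq5$. Outside the cap, $e^{2u_t}\approx 1/(t^2\theta^4)$, so
$$
\int|d\psi_t|^2\lesssim \int_{1/t}^{\pi} t^{-2}\theta^{-4}\theta^{m-1}\,d\theta=o(1)
$$
for $m\geq 5$. Similarly,
$$
\int|\tau|^2\lesssim\int_{1/t}^{\pi} t^{-2}\theta^{-6}\theta^{m-1}\,d\theta ,
$$
which is $O(t^{-2}\log t)$ for $m=6$ and $O(t^{4-m})$ for $m=5$, again tending to $0$.

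Therefore $E_2^c(\psi_t)\to0$, and choosing $t$ large gives $\phi=\psi_t$ with $E_2^c(\phi)<\varepsilon$.

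\textbf{Main obstacle.} The main step is the bookkeeping in Step 2: fixing the correct orientation of the dilation and verifying the exponent count near the concentration point. This is exactly where the bi-term, which scales like $t^{4-m}$, forces $m\geq5$, whereas for harmonic maps the condition is $m\geq3$. We would first carry out these integrals in polar coordinates around the concentration point, using the exact conformal factor.
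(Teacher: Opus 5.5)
Your argument is correct, and it uses the same family of maps as the paper. The paper's $\phi_t(\theta,r)=\left(\theta,2\arctan\left(t\tan\frac{r}{2}\right)\right)$ is exactly your conjugated dilation written in polar coordinates; the paper lets $t\to0$ where you let $t\to\infty$. Your conformal factor also agrees with the paper's $\sin\left(\alpha_t(r)\right)/\sin r$.

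The difference lies in the estimate. The paper computes $|d\phi_t|^2$ and $|\tau(\phi_t)|^2$ directly and absorbs the volume element into the one-variable integrand $\sin^2\left(\alpha_t(r)\right)\bigl[(m-2)^2\sin^{m-5}r\left(\cos\left(\alpha_t(r)\right)-\cos r\right)^2+\frac{2}{3}m(m-1)(m-3)\sin^{m-3}r\bigr]$. It bounds this uniformly by $C\sin^2\left(\alpha_t(r)\right)$; this is the only place $m\geq5$ is used, via $\sin^{m-5}r\leq1$. It then shows $\int_0^\pi\sin^2\left(\alpha_t(r)\right)dr\to0$ by splitting $[0,\pi]$ into $[0,\rho_\varepsilon]$ and a short end interval, which is essentially dominated convergence and gives no rate.

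You instead use the general identities $|d\psi|^2=me^{2u}$ and $\tau(\psi)=-(m-2)\,d\psi(\nabla u)$, together with a two-scale splitting at radius $1/t$ around the concentration point. This costs more bookkeeping, but it yields explicit decay rates. It also exhibits the scaling reason why the bi-term needs $m\geq5$ whereas the Dirichlet term only needs $m\geq3$. The paper's factor $\sin^{m-5}r$ near the concentration point encodes the same exponent count.

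Three slips should be fixed. None affects the conclusion, since you prove that both integrals tend to $0$.
\begin{itemize}
\item On $\mathbb{S}^m$ one has $\frac{2}{3}\Scal-2(m-1)=\frac{2}{3}(m-1)(m-3)$. Hence $E_2^c(\phi)=\frac12\int_{\mathbb{S}^m}|\tau(\phi)|^2+\frac{2}{3}(m-1)(m-3)|d\phi|^2\ v_g$, not the coefficient $\frac13(m-1)(m-6)$ you wrote.
\item With $\sigma$ the projection from $N$ and $\delta_t(x)=x/t$, the conformal factor equals $t$ at $N$ and $1/t$ at $S$. So your formula for $e^{u_t}$ holds with $\theta$ the distance from $N$, which agrees with your verbal description of $\psi_t$; no change of orientation is needed.
\item For $m\geq7$ the exterior contribution of $|\tau|^2$ is $O(t^{-2})$, a case you did not list.
\end{itemize}
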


\begin{proof}
We identify $\mathbb{S}^m$ with $\left(\mathbb{S}^{m-1}\times(0,\pi),\sin^2r\cdot g_{\mathbb{S}^{m-1}}+dr^2\right)$ to which we add the north and south poles. Furthermore, for any $t>0$ we define the map
\begin{align*}
    \phi_t:\left(\mathbb{S}^{m-1}\times(0,\pi),\sin^2r\cdot g_{\mathbb{S}^{m-1}}+dr^2\right)
    \to\left(\mathbb{S}^{m-1}\times(0,\pi),\sin^2\alpha\cdot g_{\mathbb{S}^{m-1}}+d\alpha^2\right)
\end{align*}
by
\begin{align*}
    \phi_t(\theta,r)=\left(\theta,\alpha_t(r)\right),\qquad \forall \theta\in\mathbb{S}^{m-1}, \ r\in(0,\pi),
\end{align*}
where
\begin{align*}
    \alpha_t(r)=2\arctan\left(t\cdot\tan\left(\frac{r}{2}\right)\right).
\end{align*}
This map extends smoothly on the poles, i.e., $\alpha_t:[0,\pi]\to[0,\pi]$ is smooth and its derivatives of even order satisfy
\begin{equation*}
  \alpha_t(0)=0,\qquad\alpha_t(\pi)=\pi,\qquad\alpha_t^{(2k)}(0)=\alpha_t^{(2k)}(\pi)=0,
\end{equation*} 
where $k$ is a positive integer. Also, we note that $\phi_1=\Id$. 

A direct calculation shows that the first and the second derivatives of $\alpha_t$ are
$$
\alpha_t'(r)=\frac{\sin\left(\alpha_t(r)\right)}{\sin r}\qquad\text{and}\qquad \alpha_t''(r)=\frac{\sin\left(\alpha_t(r)\right)\left(\cos\left(\alpha_t(r)\right)-\cos r\right)}{\sin^2 r}.
$$
Next, we obtain
\begin{equation}\label{DifferentialNorm}
  \left|d\phi_t\right|^2=m\left(\frac{\sin\left(\alpha_t(r)\right)}{\sin r}\right)^2
\end{equation}
and from
$$
\tau(\phi_t)=\left(\alpha_t''(r)+(m-1)\cot r\cdot\alpha_t'(r)-(m-1)\frac{\sin\left(\alpha_t(r)\right)\cos\left(\alpha_t(r)\right)}{\sin^2 r}\right)\partial_\alpha,
$$
we deduce
\begin{align}
\left|\tau(\phi_t)\right|^2 &=\left(\alpha_t''(r)+(m-1)\cot r\cdot\alpha_t'(r)-(m-1)\frac{\sin\left(\alpha_t(r)\right)\cos\left(\alpha_t(r)\right)}{\sin^2 r}\right)^2 \nonumber\\
&= (m-2)^2\left(\frac{\sin\left(\alpha_t(r)\right)}{\sin r}\right)^2\left(\frac{\cos\left(\alpha_t(r)\right)-\cos r}{\sin r}\right)^2. \label{TensionFieldNorm}
\end{align}
Since
\begin{equation}\label{ConformalBienergyFunctional}
  E_2^c\left(\phi_t\right)=\frac{1}{2}\int_{\mathbb{S}^{m-1}\times[0,\pi]} \left(\left|\tau(\phi_t)\right|^2+\frac{2}{3}(m-1)(m-3)\left|d\phi_t\right|^2\right)\ v_g,
\end{equation}
replacing \eqref{DifferentialNorm} and \eqref{TensionFieldNorm} in \eqref{ConformalBienergyFunctional}, we have
\begin{align}\label{expr-E2c}
E_2^c(\phi_t)&=\frac{1}{2} \left(\int_{\mathbb S^{m-1}}v_{g_{\mathbb S^{m-1}}}\right) \nonumber\\
&\quad\times\int_0^\pi \sin^{m-1} r\cdot\left(\frac{\sin\left(\alpha_t(r)\right)}{\sin r}\right)^2\left((m-2)^2\left(\frac{\cos\left(\alpha_t(r)\right)-\cos r}{\sin r}\right)^2+\frac{2}{3}m(m-1)(m-3)\right)\ dr \nonumber\\
&=\frac{1}{2}\omega_{\mathbb S^{m-1}} \nonumber\\
&\quad\times\int_0^\pi\sin^{m-3}r\cdot\sin^2\left(\alpha_t(r)\right)\left((m-2)^2\left(\frac{\cos\left(\alpha_t(r)\right)-\cos r}{\sin r}\right)^2+\frac {2}{3}m(m-1)(m-3)\right)\ dr \nonumber\\
&=\frac{1}{2}\omega_{\mathbb S^{m-1}} \nonumber\\
&\quad\times \int_0^\pi\sin^2\left(\alpha_t(r)\right)\left((m-2)^2\sin^{m-5}r\cdot\left(\cos\left(\alpha_t(r)\right)-\cos r\right)^2+\frac{2}{3}m(m-1)(m-3)\sin^{m-3}r\right)\ dr. 
\end{align}
Using the upper bounds for $\sin$ and $\cos$, since $m\geq 5$, we get the following estimate for the c-bienergy functional
\begin{align}
E_2^c(\phi_t)&<\frac{1}{2}\omega_{\mathbb S^{m-1}}\int_0^\pi\sin^2\left(\alpha_t(r)\right)\left(4(m-2)^2+\frac{2}{3}m(m-1)(m-3)\right)\ dr \nonumber\\
&=\left(2(m-2)^2+\frac{1}{3}m(m-1)(m-3)\right)\omega_{\mathbb S^{m-1}}\int_0^\pi\sin^2\left(\alpha_t(r)\right)\ dr \nonumber\\
&= C \int_0^\pi\sin^2\left(\alpha_t(r)\right)\ dr, \label{EstimateConformalBienergy}
\end{align}
where 
$$
C=\left(2(m-2)^2+\frac{1}{3}m(m-1)(m-3)\right)\omega_{\mathbb S^{m-1}}>0
$$
and $\omega_{\mathbb S^{m-1}}$ denotes the volume of the Euclidean unit sphere $\mathbb{S}^{m-1}$.
 
For the last part of the proof, we follow the ideas from \cite{MR703510}. We consider an arbitrary positive real number $\varepsilon$ and define the positive constants 
\begin{equation*}
  \eta_\varepsilon=\frac{\varepsilon}{C}\qquad\text{and}\qquad\rho_\varepsilon=\pi-\frac{\eta_\varepsilon}{2}.
\end{equation*}
The function $\tan$ is increasing thus, denoting by $K_\varepsilon=\tan\left(\rho_\varepsilon/2\right)>0$, we get
\begin{equation*}
  \tan\frac{r}{2}\in\left(0,K_\varepsilon\right),\qquad\forall r\in\left(0, \rho_\varepsilon\right).
\end{equation*}
Since $\lim_{x\to0}\sin^2x=0$, there exists $\delta_\varepsilon>0$ such that 
\begin{equation*}
  \sin^2x\in\left(0, \frac{\eta_\varepsilon}{2\rho_\varepsilon}\right),\qquad\forall x\in\left(0,\delta_\varepsilon\right).
\end{equation*}
We can assume that $\delta_\varepsilon<\pi/2$. Further, from $\lim_{t\to 0}\left(2\arctan\left(tK_\varepsilon\right)\right)=0$, it follows that there exists $\delta'_\varepsilon>0$ such that
\begin{equation*}
  2\arctan\left(tK_\varepsilon\right)\in\left(0,\delta_\varepsilon\right),\qquad\forall t\in\left(0,\delta'_\varepsilon\right).
\end{equation*}
Thus,
\begin{equation*}
  \sin^2\left(2\arctan\left(tK_\varepsilon\right)\right)\in\left(0,\frac{\eta_\varepsilon}{2\rho_\varepsilon}\right),\qquad\forall t\in\left(0,\delta'_\varepsilon\right),
\end{equation*}
and we achieve 
\begin{equation}\label{EstimateSin}
  0<\sin^2\left(\alpha_t(r)\right)=\sin^2\left(2\arctan\left(t\cdot\tan\left(\frac{r}{2}\right)\right)\right)<\sin^2\left(2\arctan\left(tK_\varepsilon\right)\right)<\frac{\eta_\varepsilon}{2\rho_\varepsilon},
\end{equation}
for any $t\in\left(0,\delta'_\varepsilon\right)$ and any $r\in\left(0,\rho_\varepsilon\right)$.

Replacing \eqref{EstimateSin} in \eqref{EstimateConformalBienergy}, we obtain
\begin{align*}
   E_2^c\left(\phi_t\right)&<C\left(\int_{0}^{\rho_\varepsilon} \sin^2\left(\alpha_t(r)\right)\ dr+\int_{\rho_\varepsilon}^{\pi} \sin^2\left(\alpha_t(r)\right)\ dr\right)\\
   &<C\left(\int_{0}^{\rho_\varepsilon} \frac{\eta_\varepsilon}{2\rho_\varepsilon}\ dr+\int_{\rho_\varepsilon}^{\pi} \ dr\right)\\
   &=\varepsilon.
\end{align*}
\end{proof}

Using the notations from the proof of Proposition \ref{prop-e2c-small}, we get the next result.
\begin{Cor} 
We consider the function $h_m^c:(0,\infty)\to(0,\infty)$ given by $h_m^c(t):=E_2^c\left(\phi_t\right)$. Then, we have
\begin{itemize}
    \item[i)] if $m=4$, the function $h^c_m$ is constant;
    \item[ii)] if $m\geq 5$, the point $t=1$ is a local maximum of $h_c^m$ and $\lim_{t\to 0} h^c_m(t)=0$.
\end{itemize}
\end{Cor}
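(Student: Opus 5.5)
For $m=4$, I would use conformal invariance rather than the integral \eqref{expr-E2c}. Via stereographic projection from the south pole, $\tan(r/2)$ is the Euclidean distance to the origin, so $\phi_t$ is a dilation by $t$. Hence $\phi_t$ is a conformal diffeomorphism of $\mathbb{S}^4$, i.e. $\phi_t^*g=e^{2f_t}g$ for some smooth $f_t$. Invariance of the energy under diffeomorphisms of the domain gives
$$
E_2^c\left(\phi_t:(\mathbb{S}^4,g)\to(\mathbb{S}^4,g)\right)=E_2^c\left(\Id:(\mathbb{S}^4,\phi_t^*g)\to(\mathbb{S}^4,g)\right).
$$
The conformal invariance of $E_2^c$ in dimension four (Bérard) then shows that this equals $E_2^c(\Id)$, so $h_4^c$ is constant. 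As a sanity check, I would also verify that the integrand in \eqref{expr-E2c} with $m=4$ integrates to a $t$-independent quantity after the substitution $\alpha=\alpha_t(r)$. This is a fallback, not the main argument.

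For $m\geq5$, the limit follows from the two preceding results. Proposition \ref{prop-non-negativeE2c} gives $h_m^c(t)>0$, since $\phi_t$ is not constant. The proof of Theorem \ref{prop-e2c-small} shows $h_m^c(t)<\varepsilon$ for all $t\in(0,\delta'_\varepsilon)$. Together these give $\lim_{t\to0}h_m^c(t)=0$.

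For the local maximum I would use the second variation formula \eqref{second-variation-c-bienergy} along the one-parameter variation $t\mapsto\phi_t$ near $t=1$. The family depends smoothly on $t$, so $h_m^c$ is smooth. Also $h_m^{c\,\prime}(1)=0$ because $\Id$ is c-biharmonic, as $\Scal$ is constant. The variation field is
$$
V=\left.\partial_t\right|_{t=1}\alpha_t(r)\,\partial_r=\frac{2\tan(r/2)}{1+\tan^2(r/2)}\,\partial_r=\sin r\,\partial_r=\nabla(-\cos r).
$$
Here $-\cos r$ is the restriction of a linear function, hence a first eigenfunction with $\Delta(-\cos r)=m(-\cos r)$. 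Since $\tau_2^c(\Id)=0$, the term involving $\nabla^\Phi_{\partial t}V_t$ drops out, and
$$
h_m^{c\,\prime\prime}(1)=\int_{\mathbb{S}^m}\langle J_2^c(V),V\rangle\ v_g.
$$
By the computation in case (1) of Section 4, with $\lambda=m-1$ and $\mu=m$, this equals
$$
(\mu-2\lambda)\left(\mu-\tfrac23(6-m)\lambda\right)\int|V|^2\ v_g.
$$
The first factor is $-(m-2)<0$. The second factor is $m-\tfrac23(6-m)(m-1)$, which equals $7/3$ for $m=5$ and is at least $m>0$ for $m\geq6$. Hence $h_m^{c\,\prime\prime}(1)<0$, and $t=1$ is a strict local maximum.

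The main obstacle is justifying the use of \eqref{second-variation-c-bienergy} for this particular one-parameter family. One has to check that the family is smooth in $(t,p)$ up to the poles; the smoothness of $\alpha_t$ on $[0,\pi]$ stated in the proof gives this. One also has to identify $V$ correctly as the gradient of a first eigenfunction. Everything after that is the eigenvalue computation already carried out in the paper.
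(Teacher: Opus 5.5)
Your proposal is correct and follows essentially the paper's own proof. For $m=4$ you use conformal invariance of $E_2^c$ under the conformal diffeomorphisms $\phi_t$; the paper invokes \cite[Theorem 2.14]{BNO} for this and also carries out the direct integral check you list as a fallback. For $m\geq 5$ you take the second variation along $W=\sin r\,\partial_r=\nabla(-\cos r)$ and read the limit off the proof of Theorem \ref{prop-e2c-small}, as the paper does. The only real difference is that you get the sign of $\int\langle J_2^c(W),W\rangle\,v_g$ directly from the Section 4 eigenvalue formula, $-(m-2)\bigl(m-\tfrac{2}{3}(6-m)(m-1)\bigr)<0$, instead of citing \cite[Corollary 4.3]{BNO}.

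One small slip: naturality under domain diffeomorphisms gives $E_2^c(\phi_t;g)=E_2^c\bigl(\Id;(\phi_t^{-1})^*g\bigr)$, or equivalently $E_2^c(\phi_t;\phi_t^*g)=E_2^c(\Id;g)$, not the version with $\phi_t^*g$ that you wrote. This is harmless, because $\phi_t^{-1}=\phi_{1/t}$ is conformal as well.
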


\begin{proof}
	Let us consider the case $m=4$. Since the map $\phi_t$ is a conformal diffeomorphism of $\mathbb{S}^4$, it follows from \cite[Theorem 2.14]{BNO} that
	$$
	E_2^c(\phi_t) = E_2^c(\Id),
	$$
	where $\Id = \phi_1$ denotes the identity map on $\mathbb{S}^4$. Consequently, the function $h_4^c$ is constant.
	
	We now provide a direct proof of this fact. Starting from \eqref{expr-E2c}, which holds for any $m\geq 2$ and, in particular for $m=4$, we perform the change of variables
	$$
	x = \tan\frac{r}{2} \in (0,\infty).
	$$
	A straightforward computation yields
	$$
	h^c_4(t)= \frac{1}{2}\omega_{\mathbb S^{3}}\int_0^\infty 
	\frac{64x^3t^2\left(x^2\left(1-t^2\right)^2+2\left(1+t^2x^2\right)^2\right)}
	{\left(1+t^2x^2\right)^4\left(1+x^2\right)^2} \, dx.
	$$
	Next, a direct computation shows that
	$$
	\frac{\partial}{\partial t}\left\{
	\frac{64x^3t^2\left(x^2\left(1-t^2\right)^2+2\left(1+t^2x^2\right)^2\right)}
	{\left(1+t^2x^2\right)^4\left(1+x^2\right)^2}
	\right\}=\frac{\partial}{\partial x}\left\{\frac{64tx^4\left(1+t^4x^2\right)}
	{\left(1+t^2x^2\right)^4\left(1+x^2\right)}\right\}.
	$$
	
	Therefore, differentiating under the integral, we obtain
	$$
	\frac{d}{dt}\left\{h_4^{c}(t)\right\}=0, \qquad \forall t>0.
	$$
	Hence, $h_4^c$ is constant and, more precisely, since $\omega_{\mathbb S^{3}}=2\pi^2$ and $\omega_{\mathbb S^{4}}=8\pi^2/3$, we get  
	\begin{align*}
	h_4^c(t)&=h_4^c(1)\\
	&= \frac{1}{2}\omega_{\mathbb S^{3}}\cdot \frac{32}{3}\\
	&=4\omega_{\mathbb{S}^4}, \qquad \forall t>0.
	\end{align*}
	Further, let us consider the case $m\geq 5$.  If we denote by $W$ the variational vector field associated with $\left\{\phi_t\right\}_{t>0}$, i.e., 
	$$
		W(p)= \left.\frac{d}{dt}\right|_{t=1}\left\{\phi_{t}(p)\right\}=(\sin r)\partial_r\in T_{\phi(p)}\mathbb{S}^m, \qquad\forall p\equiv(\theta,r)\in\mathbb{S}^m,
	$$ 
	we can see that $W=\nabla \gamma=\gamma'(r)\partial_r$, where $\gamma=\gamma(r)=-\cos r$. By straightforward computations, we obtain
	$$
	\Delta \gamma=-\left(\gamma''+(m-1)\gamma'\cot r\right)=m\gamma.
	$$
	Since
	$$
	\left.\frac{d^2}{dt^2}\right|_{t=1}\left\{h_m^{c}(t)\right\}=\int_{\mathbb{S}^m}\langle J_2^c(W),W\rangle\ v_g,
	$$
	from \cite[Corollary 4.3]{BNO}, where the c-biharmonic index of the identity map of $\mathbb{S}^m$ was computed, we obtain 
	$$
	\left.\frac{d^2}{dt^2}\right|_{t=1}\left\{h_m^{c}(t)\right\}<0.
	$$
	Thus, $t=1$ is a local maximal point of $h_m^c$.
	
	A direct proof of this fact could also be given; however, the computations are rather lengthy, so we omit them.
	
	Finally, from the proof of Proposition \ref{prop-e2c-small} it is not difficult to notice that
	$$
	\lim_{t\to 0} h^c_m(t)=0.
	$$
\end{proof}

Now, we can extend the result from Proposition \ref{prop-e2c-small} to the homotopy classes of harmonic maps from a sphere $\mathbb{S}^m$, $m\geq 5$, to an arbitrary manifold $N^n$.

\begin{Theorem}
Let $\varphi:\mathbb{S}^m\to N^n$ be a harmonic map, $m\geq 5$.
Then, the infimum of $E_2^c$ in the homotopy class of $\varphi$ is zero.
\end{Theorem}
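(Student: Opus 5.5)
The approach is to compose with the conformal-type maps $\phi_t$ from the proof of Theorem \ref{prop-e2c-small}. Consider $\psi_t:=\varphi\circ\phi_t$. Since $\phi_t$ is homotopic to $\Id$ through $\{\phi_s\}_{s\in[t,1]}$, the map $\psi_t$ is homotopic to $\varphi$. By Proposition \ref{prop-non-negativeE2c}, $E_2^c\geq 0$ on the whole homotopy class. So it suffices to show that $E_2^c(\psi_t)\to 0$ as $t\to 0$.

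On $\mathbb{S}^m$ we have $\Ric=(m-1)g$ and $\Scal=m(m-1)$, so
$$
E_2^c(\psi)=\frac12\int_{\mathbb{S}^m}|\tau(\psi)|^2+\frac{2}{3}(m-1)(m-3)|d\psi|^2\ v_g .
$$
We bound each term pointwise for $\psi=\varphi\circ\phi_t$ using the chain rules
$$
d\psi=d\varphi\circ d\phi_t, \qquad \tau(\psi)=d\varphi(\tau(\phi_t))+\tr\left(\nabla d\varphi\right)(d\phi_t(\cdot),d\phi_t(\cdot)).
$$
Because $\varphi$ is harmonic on the round target and $\phi_t$ is conformal with conformal factor $\lambda_t=\sin(\alpha_t(r))/\sin r$ (indeed $\alpha_t'=\lambda_t$), the second term is $\lambda_t^2\,\tau(\varphi)\circ\phi_t=0$. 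This is where harmonicity is used. Hence
$$
|\tau(\psi_t)|\leq \|d\varphi\|_\infty\,|\tau(\phi_t)| \qquad\text{and}\qquad |d\psi_t|^2\leq \|d\varphi\|_\infty^2\,|d\phi_t|^2 ,
$$
where $\|d\varphi\|_\infty$ is finite by compactness. More precisely, $|d\psi_t|^2=\lambda_t^2\,|d\varphi|^2\circ\phi_t$.

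Therefore
$$
E_2^c(\psi_t)\leq \max\{1,\|d\varphi\|_\infty^2\}\;\frac12\int_{\mathbb{S}^m}|\tau(\phi_t)|^2+\frac{2}{3}(m-1)(m-3)|d\phi_t|^2\ v_g
=\max\{1,\|d\varphi\|_\infty^2\}\,E_2^c(\phi_t).
$$
Here both integrands are nonnegative since $m\geq 5$, which is what makes this comparison legitimate; for $m=4$ or indefinite-sign terms it would fail. The proof of Theorem \ref{prop-e2c-small} gives $E_2^c(\phi_t)\to0$ as $t\to 0$ (the corollary's $\lim_{t\to0}h_m^c(t)=0$). This yields the infimum zero.

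The main point to check carefully is the vanishing of the second-fundamental-form term. It requires $d\phi_t$ to be conformal, i.e. $\sum_i d\phi_t(X_i)\otimes d\phi_t(X_i)=\lambda_t^2\sum_j Y_j\otimes Y_j$ for orthonormal frames $Y_j$ at $\phi_t(p)$. That holds because the metric $\sin^2\alpha\, g_{\mathbb{S}^{m-1}}+d\alpha^2$ pulls back to $\lambda_t^2(\sin^2r\, g_{\mathbb{S}^{m-1}}+dr^2)$ exactly when $\alpha_t'=\sin\alpha_t/\sin r$, which was recorded in that proof. Smoothness at the poles is already established there, so no further issues arise.
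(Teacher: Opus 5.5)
Your proposal is correct and follows the paper's proof step for step: compose with the conformal maps $\phi_t$, use harmonicity of $\varphi$ together with conformality of $\phi_t$ to get $\tau(\varphi\circ\phi_t)=d\varphi(\tau(\phi_t))$, bound $E_2^c(\varphi\circ\phi_t)$ by $\max_{\mathbb{S}^m}|d\varphi|^2\,E_2^c(\phi_t)$, and let $t\to 0$; your explicit use of the conformal factor and of Proposition \ref{prop-non-negativeE2c} only spells out steps the paper leaves implicit. One small correction to your aside: the comparison inequality also holds for $m=4$, since $\frac{2}{3}(m-1)(m-3)>0$ for every $m\geq 4$, and what fails in dimension four is only that $E_2^c(\phi_t)$ does not tend to zero (it is constant, equal to $4\omega_{\mathbb{S}^4}$).
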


\begin{proof}
Using the notations from the proof of Proposition \ref{prop-e2c-small}, we note that $\varphi \circ \phi_t$ is homotopic to $\varphi \circ \Id = \varphi$. Then, since
\begin{align*}
\tau(\varphi\circ\phi_t)&=d\varphi\left(\tau(\phi_t)\right)+\tr\nabla d\varphi\left(d\phi_t(\cdot), d\phi_t(\cdot)\right)\\
&=d\varphi\left(\tau(\phi_t)\right),
\end{align*}
we deduce that
\begin{equation*}
\left|\tau(\varphi\circ\phi_t)\right|^2\leq\left|d\varphi\right|^2\left|\tau(\phi_t)\right|^2\leq\max_{p\in\mathbb S^m}\left|d\varphi\right|^2\cdot\left|\tau(\phi_t)\right|^2.
\end{equation*}
Taking into account
\begin{equation*}
\left|d(\varphi\circ\phi_t)\right|^2\leq\max_{p\in\mathbb S^m}\left|d\varphi\right|^2\cdot\left|d\phi_t\right|^2,
\end{equation*}
we get that
\begin{equation*}
E_2^c\left(\varphi\circ\phi_t\right)\leq\max_{p\in\mathbb S^m} \left|d\varphi\right|^2\cdot E_2^c\left(\phi_t\right).
\end{equation*}
Now, applying Proposition \ref{prop-e2c-small}, the conclusion follows.
\end{proof}

\begin{Bem}
If the homotopy class of maps associated with \(\phi\) is not trivial,
then the \(c\)-biharmonic maps homotopic to \(\phi\) (if there are others than \(\phi\) itself) do not minimize \(E_2^c\); they are critical points of \(E_2^c\) but not minimizers.
\end{Bem}

\bibliographystyle{plain}
\bibliography{mybib}
\end{document}